\def\downparenfill{$\m@th\braceld\leaders\vrule\hfill\bracerd$}
\def\downparenfill{$\m@th\braceld\leaders\vrule\hfill\bracerd$}
\def\overparen#1{\mskip 2mu\mathop{\vbox{\ialign{##\crcr\crcr \noalign{\kern0.4ex}
\downparenfill\crcr\noalign{\kern0.4ex\nointerlineskip}
$\hfil\displaystyle{#1}\hfil$\crcr}}}\limits\mskip 2mu} 
\definecolor{olivegreen}{rgb}{0,0.5,0}
\newtheorem{thmm}{Theorem}
\newtheorem{propo}{Proposition}
\newtheorem{lemm}{Lemma}
\newtheorem{exe}{Example}
\newtheorem{corol}{Corollary}
\newtheorem{ass}{Assumption}
\newtheorem{prope}{Property}
\newtheorem{defin}{Definition}
\newtheorem{remm}{Remark}
\newenvironment{lemma}{\begin{lemm}}{\hfill \end{lemm}}
\newenvironment{property}{\begin{prope}}{\hfill \end{prope}}
\newenvironment{remark}{\begin{remm} \rm}{\hfill \end{remm}}
\newenvironment{assumption}{\begin{ass}}{\end{ass}}
\newenvironment{theorem}{\begin{thmm}}{\hfill $\square$ \end{thmm}}
\newenvironment{proposition}{\begin{propo}}{\hfill $\square$ \end{propo}}
\newenvironment{definition}{\begin{defin}}{\hfill \end{defin}}
\newenvironment{proof}{{\it Proof. }}{\hfill $\blacksquare$ }
\newtheorem{dwellt}{Condition}
\newtheorem{feedback}{Feedback}
\newtheorem{adapt_alg}{Algorithm}
\newif\ifitsdraft
\def\@citex[#1]#2{%
  \let\@citea\@empty
  \@cite{\@for\@citeb:=#2\do
    {\@citea\def\@citea{,\penalty\@m\hskip.1em}%
     \edef\@citeb{\expandafter\@firstofone\@citeb\@empty}%
     \if@filesw\immediate\write\@auxout{\string\citation{\@citeb}}\fi
     \@ifundefined{b@\@citeb}{\mbox{\reset@font\bfseries ?}%
        \G@refundefinedtrue
        \@latex@warning
          {Citation `\@citeb' on page \thepage \space undefined}}%
        {\hbox{\csname b@\@citeb\endcsname}}}}{#1}}
\begin{document}

\begin{frontmatter}
\title{Stabilization of Quasilinear Parabolic Equations by Cubic Feedback at Boundary with Estimated Region of Attraction} 

\author[UGA]{M. C. Belhadjoudja}\ead{mohamed.belhadjoudja@gipsa-lab.fr}, 
\author[UGA]{M. Maghenem}\ead{mohamed.maghenem@gipsa-lab.fr},
\author[UGA,Dalhousie]{E. Witrant}\ead{emmanuel.witrant@gipsa-lab.fr},
\author[UCSD]{M. Krstic}\ead{mkrstic@ucsd.edu} 

\address[UGA]{Universit\'e Grenoble
Alpes, CNRS, Grenoble-INP, GIPSA-lab, F-38000, Grenoble, France}
\address[Dalhousie]{Departement of Mechanical
Engineering, Dalhousie University, Halifax B3H 4R2, Nova Scotia, Canada}
\address[UCSD]{Department of Mechanical and Aerospace Engineering, University of California San Diego, 92093 San Diego, USA}

\begin{keyword}    
Parabolic equations; finite-time blow up; boundary stabilization; Lyapunov methods; well-posedness. 
\end{keyword}

\begin{abstract}
For quasilinear parabolic partial differential equations (PDEs) that exhibit finite-time blow up in open loop, i.e., under null boundary conditions, we provide an estimate of the region of attraction under cubic feedback laws applied at the boundary, using boundary measurements. We guarantee: 1- $L^2$ and $H^1$ exponential stability of the origin with an estimate of the region of attraction. 2- Convergence of the $H^2$ and the $\mathcal{C}^1$ norms of the solutions to zero. 3- Existence and uniqueness of complete \textit{classical} solutions. 4- Positivity of the solutions starting from positive initial conditions. Unlike existing approaches, our framework handles nonlinear state-dependent diffusion, convection, and (destabilizing) reaction. The cubic terms are used to enlarge our estimate of the region of attraction. The size of the region of attraction is shown, in many cases, to grow unboundedly as diffusion increases. Finally, our controllers can be implemented as Neumann, Dirichlet, or mixed-type boundary conditions.
\end{abstract}

\end{frontmatter}

\setlength{\abovedisplayskip}{6pt}
\setlength{\belowdisplayskip}{6pt}
\setlength{\abovedisplayshortskip}{3pt}
\setlength{\belowdisplayshortskip}{3pt}

\section{Introduction} \label{Introduction}
Quasilinear parabolic PDEs exhibiting 
finite-time blow-up phenomena have been studied extensively since the 1940s. Initially motivated by adiabatic explosion \cite{combustion2}, these equations have later gained interest in light of the seminal work \cite{fujita1}. Nowadays, these equations are fundamental in many applications, including chemical kinetics \cite{chemical1} and plasma physics \cite{plasma}; see \cite{galaktionov} for further details.

Finite-time blow up refers to the max norm of the state over the spatial domain growing unboundedly in finite time. This phenomena has drawn the attention of the control community, with the ultimate goal of designing controllers to prevent it. However, these efforts were curtailed since \cite[Theorem 1.1]{lack2}, where it is demonstrated that, for simple instances of the heat equation, it is not possible to achieve global stability using neither boundary nor in-domain controls. In particular, finite-time blow up is doomed to occur for large initial conditions, regardless of the choice of control inputs. As a consequence, for general classes of quasilinear parabolic equations, the best achievable stability results come in the form of an estimate of the region of attraction. Yet, estimating the region of attraction is of significant interest as, under null boundary inputs, finite-time blow up can still occur for arbitrarily small initial conditions \cite[Page 11]{book_quasi}.

Although the control of parabolic PDEs has been intensively studied (see e.g. \cite{par1,backstepping,par8,par9,par11,par12,par13,coron_steadystate}), boundary  stabilization of quasilinear parabolic PDEs, which may otherwise exhibit finite-time blow up, remains unexplored. To our knowledge, the only studies addressing this problem pertain to the specific subclass of semilinear parabolic equations with \textit{Volterra-type} nonlinearity, and employ nonlinear backstepping \cite{volterra1,volterra2}. This method, however, cannot handle nonlinear state-dependent diffusion or convection, besides the incapacity to assure positivity of solutions. That being said, it is important to mention \cite{shock}, where backstepping is applied to regulate shock-like equilibria in Burgers’ equation with quadratic convection. The error PDE exhibits in open loop finite-time blow up, but the approach relies on the Cole-Hopf transform—a change of variables specific to Burgers’ equation that linearizes quadratic convection. We mention also \cite{par10}, which--although not considering PDEs exhibiting finite-time blow up--is the only work achieving global stabilization for parabolic equations with polynomial reaction, but under the assumption that the nonlinearity is stabilizing when the state is large. Moreover, the method in \cite{par10} cannot handle state-dependent diffusion and nonlinear convection, and does not ensure positivity of solutions. Such limited results underscore the need for a more general control framework that handles nonlinear state-dependent diffusion, convection, and--destabilizing--reaction.

In this paper, we propose boundary controllers, in the form of cubic polynomials of the state at the boundaries, that achieve: 1- Exponential stability of the origin in the $L^2$ and $H^1$ norms with an estimate of the region of attraction. 2- Exponential convergence of the $H^2$ and $\mathcal{C}^1$ norms of the state to zero. 3- Existence and uniqueness of complete classical solutions. 4- Positivity of solutions, i.e., the solution remains nonnegative if the initial condition is nonnegative. 
Additionally, our boundary controllers can be written as Neumann-type, Dirichlet-type, or Dirichlet-Neumann-type controllers. The passage from one controller to another is achieved through solving cubic equations using Cardano's root formula. We further show that, in many cases, the region of attraction---along with the decay rate---grows unboundedly as the diffusion coefficient increases. To argue the importance of such results, we recall that, if the boundary control inputs are set to zero, finite-time blow up occurs even under 
arbitrarily-small initial conditions and arbitrarily-large diffusion coefficients \cite{book_quasi,conf}. 
The philosophy of our design is inspired by \cite{burgers}, the first work to ever use cubic polynomials, in the boundary states, as boundary controllers. However, \cite{burgers} considers the viscous Burgers' equation, which does not exhibit finite-time blow up. A major challenge in our study, absent in \cite{burgers} and similar  works (see e.g.  \cite{balogh2,kdv2,dec,ks}) concerns handling the reaction term responsible of finite-time blow up in open loop. 
From the technical point of view, this term actually prevents us from analyzing the state's $L^2$ and max norms separately, which is a key ingredient in \cite{burgers}. 

We shall also mention that, in \cite{ACC24,aut_accepted}, we extended the controllers in \cite{burgers,ACC_Moh,quadratic} to semilinear parabolic PDEs, using controllers that involve some state norms. This allowed us to fully compensate the effect of the reaction term in \cite{ACC24}, and of the unknown \textit{anti-diffusion} parameter in the Kuramoto-Sivashinsky equation in \cite{aut_accepted}, and to deduce the decay of the solutions' $L^2$ norm towards zero. However, because the $H^1$ norm of the state was not analyzed, we could not establish well-posedness. The present work addresses this limitation for second-order quasilinear parabolic PDEs. By designing controllers that rely only on boundary measurements—eschewing the state norms in the controllers—we are able to analyze the $H^1$ norm of the state and, consequently, establish well-posedness. The price we pay for avoiding the use of state norms in the controllers is that, unlike in \cite{ACC24} where diffusion played no role, in the current work, the size of the region of attraction depends critically on diffusion: it grows as diffusion increases and shrinks as diffusion decreases. In this sense, our approach is \textit{diffusion-enabled}, in contrast with the \textit{convection-enabled} strategy of \cite{ACC24}, where the convection term led, as in \cite{ACC_Moh,aut_accepted}, to a cubic polynomial in the Dirichlet boundary input that was used to compensate for all potentially destabilizing effects, using, in particular, state norms. This reliance on diffusion makes our approach specific to parabolic PDEs.

The remainder of this paper is organized as follows. The problem statement is given in Section \ref{problem statement}. The main result is in Section \ref{main results}. The paper is ended with a conclusion and some research perspectives.

Unlike a preliminary conference version \cite{conf}, this paper provides proofs and explanations, and presents new stability and convergence results in higher norms---cf. the forthcoming Theorem \ref{thm1}.

\textit{Notation.} Given $v : [0,1] \to \mathbb{R}$, we let $|v|_{\infty} := \max_{x \in [0,1]}|v(x)|$. Similarly, given $T \in (0,+\infty]$,  
$u : [0,1] \times [0,T) \to \mathbb{R}$, and $t \in [0,T)$, 
 we let $|u(\cdot,t)|_{\infty}:=\max_{x \in [0,1]}|u(x,t)|$. We also write $|u|_{\infty}$ to mean the map $t \mapsto |u(\cdot,t)|_{\infty}$.
Furthermore, we denote by $u_x$ the partial derivative of $u$ with respect to $x$, $u_{xx}$ the second partial derivative of $u$ with respect to $x$, and $u_t$ the partial derivative of $u$ with respect to $t$. Moreover, for each $t \in [0,T)$, we let $|u(\cdot,t)|_{L^2} := (\int_{0}^{1}u(x,t)^2dx)^{1/2}$, $|u(\cdot,t)|_{H^1} := (|u(\cdot,t)|_{L^2}^2+|u_x(\cdot,t)|_{L^2}^2)^{1/2}$, $|u(\cdot,t)|_{H^2} := (|u(\cdot,t)|_{H^1}^2+|u_{xx}(\cdot,t)|_{L^2}^2)^{1/2}$, and $|u(\cdot,t)|_{\mathcal{C}^1} := |u(\cdot,t)|_{\infty}+|u_x(\cdot,t)|_{\infty}$. We also write $|u|_{L^2}$ and $|u|_{H^1}$ to mean the maps $t\mapsto |u(\cdot,t)|_{L^2}$ and $t \mapsto |u(\cdot,t)|_{H^1}$, respectively. Given $\beta \in (0,1)$, we denote by $\mathcal{H}^{2+\beta}[0,1]$ 
the space of twice continuously-differentiable maps $v: [0,1] \rightarrow \mathbb{R}$ such that $v''$, the second-order derivative of $v$, is Hölder continuous with exponent $\beta$, i.e., there exists $C>0$ such that 
$ |v''(x_1)-v''(x_2)| \leq C |x_1-x_2|^{\beta}$ for all $x_1,x_2 \in [0,1]$. 
In particular, we denote by $\mathcal{H}^{2+\beta,1+\beta/2}([0,1] \times [0,T))$, the space of maps $u : [0,1] \times [0,T) \to \mathbb{R}$ such that $u_{xx}$ and $u_t$ are Hölder continuous in $x$ with exponent $\beta$ uniformly in $t$ and Hölder continuous in $t$ with exponent $\beta/2$ uniformly in $x$.  
We denote by $\mathcal{H}_{loc}^{2+\beta,1+\beta/2}([0,1]\times [0,T))$ the space of maps $u :[0,1] \times [0,T) \to \mathbb{R}$ such that the restriction of $u$ to any bounded subset $[0,1] \times L \subset [0,1] \times [0,T)$ belongs to $\mathcal{H}^{2+\beta,1+\beta/2}([0,1] \times L)$.

\section{Problem formulation} \label{problem statement}
\subsection{Class of systems}\label{open_loop}
Consider the quasilinear parabolic PDE 
\begin{empheq}[left=\Sigma:\ \left\{,right=\right.]{align}
&u_t   = \varepsilon(u)u_{xx} + \sum_{i=1}^{n}\gamma_i u^i u_x + u^p, \label{one}\\
&u_x(0)= v_0(u(0)), \ \ u_x(1) = v_1(u(1)), \label{two}\\
&u(x,0)= u_0(x)  \in  \mathcal{H}^{2+\beta}[0,1], ~~ \beta \in (0,1), \label{three}
\end{empheq}
where $x\in [0,1]$ denotes the space variable, $t\geq 0$ denotes  the time variable, and $u \in \mathbb{R}$ denotes the state variable. 

The term $\varepsilon(u)u_{xx}$ is referred to as \textit{diffusion}, with $\varepsilon>0$ being the  
state-dependent diffusion coefficient, verifying the following assumption. 

\begin{assumption}\label{ass1}
The map $u \mapsto \varepsilon(u)$ is differentiable, and there exist $\underline{\varepsilon}>0$ and a continuous non-decreasing map $\bar{\varepsilon} : \mathbb{R}_{\geq 0}\to \mathbb{R}_{\geq 0}$ such that
$\inf_{u\in \mathbb{R}} \{\varepsilon(u)\} \geq \underline{\varepsilon}$ and 
\begin{align*}
|\varepsilon'(u)|  \leq \bar{\varepsilon}(s) \qquad  \forall u \in [-s,s], ~~ \forall s\geq 0.
\end{align*}
\end{assumption}

The term $\sum_{i=1}^{n}\gamma_i u^i u_x$, for $n \in \{1,2,...\}$ and $\gamma_i \in \mathbb{R}$ for all $i \in \{1,2,...,n\}$, represents the \textit{convection} with 
$\{\gamma_i\}_{i=1}^{n}$ being the convection coefficients. The  term $u^p$, $p \in (1,+\infty)$, is the \textit{reaction}, the cause of finite-time blow up. Furthermore, the boundary conditions in \eqref{two}, of Neumann type, involve the inputs $v_1, v_o: \mathbb{R}\to \mathbb{R}$ to be designed.
Finally, the choice of the initial condition in \eqref{three} is necessary for the existence of classical solutions \cite{ladyzen}.

\begin{definition}\label{def1}
A classical solution to $\Sigma$ defined on $[0,T)$, for some $T\in (0,+\infty]$, is any map $u \in \mathcal{H}^{2+\beta,1+\beta/2}_{loc}([0,1]\times [0,T))$ that verifies \eqref{one} for all $(x,t)\in (0,1)\times (0,T)$, \eqref{two} for all $t\in [0,T)$, and \eqref{three} for all $x\in [0,1]$. 
\end{definition}

A solution $u$ is \textit{complete} if it is defined on $[0,+\infty)$. It is said to blow up in finite time if there exists $T_m \in (0,+\infty)$ such that $\lim_{t\rightarrow T^{-}_m} |u(\cdot,t)|_{\infty} = +\infty$.

\begin{remark}
According to Definition \ref{def1}, a necessary condition for the existence of solutions is
\begin{align}
u_o'(0) = v_o(u_o(0)), \  \  \ u_o'(1) = v_1(u_o(1)), \label{compatibility}
\end{align}
known as the \textit{$0$th-order compatibility} condition \cite{ladyzen}.
\end{remark}

Equation $\Sigma$ constitutes a fairly-general prototype to study parabolic PDEs exhibiting finite-time blow up. 
Indeed, for the specific  quasilinear heat equation $u_t = \varepsilon(u) u_{xx} + u^2$, with $\varepsilon \in \mathcal{C}^2$ and lower bounded by a positive constant, finite-time blow up occurs under $(v_o,v_1):=0$ whenever $\int_{0}^{1} u_o(x)dx > 0$. Moreover, the blow-up time is bounded by a constant independent of $\varepsilon$; see \cite[Page 11]{book_quasi} and \cite[Section II.A]{conf}. Finally, it is essential to mention that $\Sigma$ is not globally stabilizable in general. In particular, for $\varepsilon$ constant, $\gamma_i:=0$ for all $i\in \{1,2,...,n\}$, and $p:=2$, there are initial conditions from which, given any two maps $v_o$ and $v_1$, the solutions (when they exist) necessarily blow up in finite time \cite{volterra1}.

\subsection{Control objective}

Our goal is to design $(v_o,v_1)$ to establish the following three properties for the resulting closed-loop system.

\begin{property}[Stability and convergence]\label{prop1}
There exist $\lambda_o,\lambda_1, \omega >0$ and $\mu \geq 0$ such that, provided that
\begin{align}
\kappa_o(u_o) & := \bigg[|u_o|^2_{H^1} + \lambda_1u_o(1)^2+ \frac{\mu}{2}u_o(1)^4\nonumber \\
&~\quad + \lambda_o u_o(0)^2+ \frac{\mu}{2}u_o(0)^4\bigg]^{1/2} \leq \sqrt{2\omega}, \label{init_func}
\end{align}
 there exist $\sigma(\kappa_o)>0$ and 
$\bar{\zeta} : \mathbb{R}_{\geq 0} \times \mathbb{R}_{\geq 0} \rightarrow \mathbb{R}_{\geq 0}$ continuous,  nondecreasing in both arguments, and $\bar{\zeta}\left(0,0\right) = 0$,
such that every complete  solution satisfies, for all $t \geq 0$,
\begin{align}
|u(\cdot,t)|^2_{L^2} & \leq |u_o|^2_{L^2}\exp^{-\sigma t},  \label{decay_L2} \\
|u(\cdot,t)|_{\infty}^2 & \leq 2\kappa_o |u_o|_{L^2}  \exp^{-(\sigma/2)t} + |u_o|^2_{L^2} \exp^{-\sigma t}, \label{decay_max} \\
|u_x(\cdot,t)|^2_{L^2}
& \leq \bar{\zeta}(\kappa_o, |u_o|_{L^2})  \exp^{-(\sigma/3)t}  - \lambda_1 u(1,t)^2 \nonumber \\ & - \lambda_ou(0,t)^2 
 - \frac{\mu}{2} u(1,t)^4 - \frac{\mu}{2}u(0,t)^4. \label{decay_H1} 
 \end{align}
Moreover, 
\begin{equation}
\label{max_ux}
\begin{aligned}
& \lim_{t\to +\infty} \exp^{(\sigma/4)t} \times \\ &
\max\left\{ |u_{xx}(\cdot,t)|^2_{L^2}, |u_t(\cdot,t)|^2_{L^2}, |u_x(\cdot,t)|_{\infty}^2 \right\} = 0. 
 \end{aligned}
 \end{equation}
\end{property}

\begin{property}[Well-posedness]\label{prop2} 
Under the compatibility condition \eqref{compatibility},
there exists a unique solution that is complete, and any other solution must be a restriction of the complete solution.
\end{property}

\begin{property}[Positivity]\label{prop3}
If $u_o(x)\geq 0$ for all $x \in [0,1]$ and $u$ is a complete solution to $\Sigma$, then $u(x,t) \geq 0$ for all $(x,t)\in [0,1]\times [0,+\infty)$.
\end{property}

Additionally, $(v_o,v_1)$ must verify the following property. 

\begin{property}[Invertibility] \label{prop4}
The maps $v_o$ and $v_1$ are invertible, i.e., there exist $d_o$, $d_1 : \mathbb{R} \rightarrow \mathbb{R}$ such that, given any solution $u$ to $\Sigma$,
\begin{equation*}
\begin{aligned}
u_x(0) = v_o(u(0)) & \Longleftrightarrow u(0) = d_o(u_x(0)), \\
 \ \ u_x(1) = v_1(u(1)) & \Longleftrightarrow u(1) = d_1(u_x(1)).
\end{aligned}
\end{equation*}
\end{property}

\begin{remark}
Using Agmon's inequality in the Appendix, we obtain
\begin{align*}
 \kappa_o  \leq 2\sqrt{\mu}|u_o|_{H^1}^2  +\sqrt{1+4\max\{\lambda_o,\lambda_1\}}|u_o|_{H^1}.
\end{align*}
Hence, provided that the $H^1$ norm of $u_o$ is sufficiently small, we can achieve \eqref{init_func}. Furthermore, Property \ref{prop1} entails both $L^2$ and $H^1$ exponential stability of the origin with an estimate of the region of attraction. It is interesting to note that the limit in \eqref{max_ux} holds regardless  the size of $|u_{o}''|_{L^2}$, $|u_t(\cdot,0)|_{L^2}$, and $|u_o'|_{\infty}$. 
\end{remark}

\begin{remark}
 Property \ref{prop3} can be crucial in applications where negative values of the state are meaningless. This is the case in  autocatalytic processes \cite{chemical1}, where the state represents a chemical concentration. 
\end{remark}

 \begin{remark}
 The invertibility of $v_o$ and $v_1$ in Property \ref{prop4} makes our design adaptable to different applications. For instance, in fluid systems, the Dirichlet-type actuation $(u(0), u(1)) = \left( d_o(u_x(0)),  d_1(u_x(1)) \right)$ is often more feasible than Neumann-type actuation \cite{dirichlet2}. However, in thermal systems, Neumann-type actuation is typically more practical than Dirichlet-type actuation \cite{balogh2}; see also \cite[Equations (3.12) and (3.13)]{burgers}, \cite[Remark 2.3]{dec}, and \cite[Equation (2.25)]{ks}.
\end{remark}

\section{Main results} \label{main results}

To state our main result, we first  design $(v_o,v_1)$ as
\begin{align}
v_o(u(0)) & := \lambda_ou(0)+\mu u(0)^3, \label{v2} \\
v_1(u(1)) & := -\lambda_1u(1)-\mu u(1)^3, \label{v1}
\end{align}
where $\mu := \frac{1}{\underline{\varepsilon}}\bigg( \frac{|\gamma_1|}{2} +\sum_{i=2}^{n}\frac{|\gamma_i | M^{i-2}}{i+2}\bigg)$ and 
\begin{align*}
\lambda_1 & := \frac{2(m_1+k_1)+|\gamma_1|}{2\underline{\varepsilon}}, ~
\lambda_o := \frac{2(m_o+k_o)+|\gamma_1|}{2\underline{\varepsilon}}.
\end{align*}
Here, $M, m_1, m_o>0$ and $k_1,k_o\geq 0$ are free   parameters. 

\begin{remark}
 When $\gamma_i = 0$ for all $i\in \{1,2,...,n\}$,  we recover the so-called Robin's boundary conditions $u_x(1) = - \lambda_1 u(1)$ and $u_x(0)=\lambda_o u(0)$.
\end{remark}

\begin{theorem}\label{thm1}
Consider system $\Sigma$ such that Assumption \ref{ass1} holds. Let $(v_o,v_1)$ as in  \eqref{v2}-\eqref{v1}, respectively. 
Then, 
\begin{itemize}
\item Property \ref{prop1} holds for any $\omega>0$  verifying 
\end{itemize}
\begin{align}
m_1+m_o & > 2(6\omega)^{(p-1)/2}+(6\omega)^{p-1}/\underline{\varepsilon}, \label{omega_1} \\
\underline{\varepsilon}-2(m_1+m_o)& > \sqrt{6\omega}\bar{\varepsilon}(\sqrt{6\omega}) +\frac{n}{2\underline{\varepsilon}}\bigg(\sum_{i=1}^{n}\gamma_i^2(6\omega)^i\bigg), \label{omega2}
\end{align}
\begin{align}
\sum_{i=2}^{n}\frac{|\gamma_i|}{i+2}M^{i-2} & > \sum_{i=2}^{n}\frac{|\gamma_i|}{i+2}(6\omega)^{(i-2)/2} \nonumber \\
&~\quad \text{if $\exists i\in \{2,3,...,n\}$ \text{s.t.}  $\gamma_i\neq 0$,} \label{omega3}
\end{align}
and for 
\begin{align}
&\sigma := m_1+m_o-2\left(\sqrt{3}\kappa_o\right)^{(p-1)} > 0, \label{sigma} \\
&\bar{\zeta} := \kappa_o^2+ 4(\lambda_1+\lambda_o)\kappa_o|u_o|_{L^2}+\frac{\mu}{5}|u_o|_{L^2}^4+\frac{8\mu \kappa_o}{7}|u_o|_{L^2}^{3} + \nonumber \\
&\left[\frac{\lambda_1+\lambda_o+4\mu \kappa_o^2}{2} + \frac{3^p\kappa_o^{2p-2}}{2\underline{\varepsilon}\big(m_1+m_o-2(6\omega)^{(p-1)/2}\big)} \right] |u_o|_{L^2}^2
\nonumber \\ &
 +   \frac{(3/4) \bigg[\bigg(\sum_{i=1}^{n} \frac{n 3^i\gamma_i^2 \kappa_o^{2i}}{\underline{\varepsilon}} \bigg) +\frac{m_1+m_o}{3}\bigg]}{\underline{\varepsilon} - \sqrt{6\omega} \bar{\varepsilon}\left(\sqrt{6\omega}\right)-2(m_o+m_1)} |u_o|_{L^2}^2.
\end{align}
\begin{itemize} 
\item Property \ref{prop2} holds when $\varepsilon \in \mathcal{C}^2$ and $\kappa_o(u_o) \leq \sqrt{2\omega}$ with $\omega$ verifying \eqref{omega_1}-\eqref{omega3}.

\item Property \ref{prop3} holds when  $\varepsilon' \in \mathcal{H}_{loc}(\mathbb{R})$.
\end{itemize}
\begin{itemize}
\item Property \ref{prop4} holds, when $\mu \neq 0$, with 
\begin{equation}
\label{cubic1}
\begin{aligned} 
\hspace{-0.7cm} d_{l}(y) & := \sqrt[3]{ \frac{(-1)^{l} y}{2\lambda_l}+\sqrt{\frac{y^2}{4\lambda_l^2} + \frac{\lambda_l^3}{27\mu^3}}}  \\
\hspace{-0.7cm} & +\sqrt[3]{ \frac{(-1)^{l} y}{2\lambda_l}-\sqrt{\frac{y^2}{4\lambda_l^2} + \frac{\lambda_l^3}{27\mu^3}}} ~~~~ \forall l \in \{0,1\}, 
\end{aligned}
\end{equation}
and, when $\mu = 0$, with 
$ d_{l}(y) :=  (-1)^{l} y/\lambda_l$ for all 
$l \in \{0,1\}$. 
\end{itemize}
\end{theorem}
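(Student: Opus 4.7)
\textit{Proof plan.} The theorem asserts four separate properties, which I address in increasing order of difficulty. Property \ref{prop4} reduces to inverting the strictly monotone cubic $y = \pm(\lambda_l z + \mu z^3)$ for $l\in\{0,1\}$. Since $\lambda_l, \mu > 0$, the Cardano discriminant $y^2/(4\lambda_l^2) + \lambda_l^3/(27\mu^3)$ is strictly positive, so the standard depressed-cubic formula yields the unique real root displayed in \eqref{cubic1}; the $\mu = 0$ subcase is affine and immediate.

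For Property \ref{prop1}, my plan is to work with two nonnegative functionals along classical trajectories, $V_o(u) := |u|_{L^2}^2$ and $V_1(u) := |u_x|_{L^2}^2 + \lambda_1 u(1)^2 + \lambda_o u(0)^2 + (\mu/2)u(1)^4 + (\mu/2)u(0)^4$, so that $\kappa_o^2 = V_o(u_o) + V_1(u_o) \le 2\omega$. Differentiating $V_o$ along \eqref{one}, integrating the diffusion and convection by parts, and substituting \eqref{v2}-\eqref{v1} produces (i) a bulk term bounded by $-(\underline{\varepsilon} - |u|_\infty \bar{\varepsilon}(|u|_\infty))|u_x|_{L^2}^2$ from $-\int(\varepsilon(u)+u\varepsilon'(u))u_x^2\,dx$, (ii) negative boundary contributions $-\varepsilon(u(l))(\lambda_l u(l)^2 + \mu u(l)^4)$, (iii) residual convection boundary terms $\sum_i (\gamma_i/(i+2))[u^{i+2}]_0^1$ that the specific values of $\mu$, $\lambda_l$, and $M$ are calibrated to absorb, and (iv) a reaction term bounded by $|u|_\infty^{p-1}|u|_{L^2}^2$. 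A bootstrap argument in which $V_1(u(\cdot,t)) \le \kappa_o^2$ is assumed on a maximal subinterval, combined with Agmon's inequality $|u|_\infty^2 \le |u|_{L^2}^2 + 2|u|_{L^2}|u_x|_{L^2}$, yields $|u|_\infty^2 \le 6\omega$; condition \eqref{omega_1} then turns the estimate into $\dot V_o \le -\sigma V_o$, delivering \eqref{decay_L2}.

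Next I differentiate $V_1$. The crucial algebraic observation is that the boundary terms $2[u_x u_t]_0^1$ arising from integrating $2\int u_x u_{xt}\,dx$ by parts, combined with the $2(\lambda_l u(l) + \mu u(l)^3)u_t(l)$ terms from differentiating the boundary parts of $V_1$, cancel \emph{exactly} on account of \eqref{v2}-\eqref{v1}. What remains, $-2\int u_{xx}u_t\,dx$, is handled by substituting $u_t$ from \eqref{one} and Young-ing the cross-terms $u^i u_x u_{xx}$ and $u^p u_{xx}$ against $\underline{\varepsilon}|u_{xx}|_{L^2}^2$; conditions \eqref{omega2}-\eqref{omega3} make these absorptions strict. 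Together with a Poincaré-type inequality and the $L^2$ decay already established, this produces a Grönwall inequality yielding \eqref{decay_H1}; estimate \eqref{decay_max} then follows from Agmon applied to \eqref{decay_L2} and the uniform bound $|u_x|_{L^2} \le \kappa_o$. For \eqref{max_ux}, I would differentiate \eqref{one} in $t$ to get an equation for $w := u_t$ (whose boundary values are determined by evaluating \eqref{one} at $x\in\{0,1\}$ together with \eqref{v2}-\eqref{v1}), run an analogous energy estimate for $|w|_{L^2}^2$, then convert decay of $|u_t|_{L^2}$ into decay of $|u_{xx}|_{L^2}$ via \eqref{one} read as an elliptic identity, and apply Agmon once more to $u_x$.

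Properties \ref{prop2} and \ref{prop3} follow by standard machinery once Property \ref{prop1} is in hand: the hypothesis $\varepsilon \in \mathcal{C}^2$ together with \eqref{compatibility} places the problem within the scope of the local existence theorem of \cite{ladyzen}, and the a priori $L^\infty$ bound precludes the blow-up alternative, giving Property \ref{prop2}; for Property \ref{prop3}, $v_o(0) = v_1(0) = 0$ makes the zero function a classical solution, and the Hölder regularity of $\varepsilon'$ licenses the strong parabolic maximum principle together with Hopf's lemma, preventing a nonnegative solution from ever touching zero from above. The principal technical obstacle throughout is the simultaneous closure of the three estimates in Property \ref{prop1}: the sup-norm bound is needed both to dominate the destabilizing reaction $u^p$ and to absorb the cross-term $u\varepsilon'(u)u_x^2$ in $\dot V_o$, yet it hinges on an $H^1$ bound whose derivation in turn requires absorbing the convection cross-terms $u^i u_x u_{xx}$ into the diffusion dissipation in $\dot V_1$ — the algebraic tightness of \eqref{omega_1}-\eqref{omega3} is precisely the condition under which this bootstrap closes, and the polynomial form of \eqref{v2}-\eqref{v1} is the one that simultaneously produces the quadratic/quartic boundary dissipation used in $\dot V_o$ and the exact cross-cancellation exploited in $\dot V_1$.
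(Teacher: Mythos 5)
Your overall architecture for Property \ref{prop1} coincides with the paper's: two functionals (your $V_o,V_1$ are twice the paper's $V$ and $H$), the exact cancellation of the boundary terms $[u_xu_t]_0^1$ against the time-derivatives of the boundary parts of $V_1$ (the paper's Claim \ref{clm1}, giving $\dot H=-\int_0^1 u_{xx}u_t\,dx$), Young absorption of $u^iu_xu_{xx}$ and $u^pu_{xx}$ into $\underline{\varepsilon}|u_{xx}|_{L^2}^2$, Agmon, and a continuity/bootstrap argument. One caution: the bootstrap cannot be run on $V_1$ alone, because $\dot V_1$ has no sign (the paper's bound \eqref{ineq_H_to_use} is purely nonnegative); it must be run on the sum $E=V+H$, whose decrease is secured because the $-\Lambda(E)|u_x|_{L^2}^2$ dissipation in $\dot V$ absorbs the growth terms in $\dot H$ — this is exactly what conditions \eqref{omega_1}--\eqref{omega3} encode, and your text leaves the closing mechanism unidentified. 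Properties \ref{prop3} and \ref{prop4} are handled in essentially the paper's way (Pao-type comparison using monotonicity of $u\mapsto u^p$, $-v_1$, $v_o$ on $[0,+\infty)$; Cardano with nonnegative discriminant).

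Two genuine gaps. First, for \eqref{max_ux} you propose differentiating \eqref{one} in $t$ and running an energy estimate on $w:=u_t$. Classical solutions in $\mathcal{H}^{2+\beta,1+\beta/2}_{loc}$ have H\"older-continuous $u_t$ and $u_{xx}$ but no guaranteed $u_{tt}$ or $u_{xxt}$, so the $w$-equation is not available at this regularity; moreover the boundary conditions for $w$ would come from differentiating \eqref{two} in time ($w_x(l)=v_l'(u(l))w(l)$), not from evaluating \eqref{one} at $x\in\{0,1\}$ as you state. The paper avoids all of this: it retains the $-\tfrac{\underline{\varepsilon}}{4}|u_{xx}|_{L^2}^2$ dissipation in the $\dot H$ bound, integrates $\tfrac{d}{dt}(\exp^{(\sigma/4)t}H)$ to get $\int_0^{+\infty}\exp^{(\sigma/4)t}|u_{xx}|_{L^2}^2\,dt<+\infty$ (hence \eqref{u_xx_int}), and then bounds $|u_t|_{L^2}^2$ \emph{algebraically} from \eqref{one} — the direction of inference between $u_{xx}$ and $u_t$ is the reverse of yours. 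Second, for Property \ref{prop2} your "local existence plus blow-up alternative" skips the central obstruction the paper flags: the only well-posedness tool available (Lemma \ref{ladyz}, with its linear-growth conditions \eqref{to_show_1}--\eqref{to_show_4}) does \emph{not} apply to $\Sigma$ because of the superlinear reaction $u^p$, the nonlinear convection $u^iu_x$, and the cubic boundary feedback. The paper's remedy is the cutoff system $\bar\Sigma$ (multiplying all nonlinearities by a $\mathcal{C}^2$ cutoff $\Phi$ supported where $|u|\leq\mathcal{M}+1$), to which Lemma \ref{ladyz} does apply, followed by an argument using \eqref{decay_max} that the solution of $\bar\Sigma$ never leaves $\{|u|\leq\mathcal{M}\}$ and hence solves $\Sigma$; without this (or an independently justified local existence theorem with a continuation criterion for quasilinear Neumann problems), your argument for existence and uniqueness is incomplete.
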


\begin{remark}
Since $\bar{\varepsilon}$ is continuous and nondecreasing, then the inequalities \eqref{omega_1}-\eqref{omega3} would hold for $\omega$ sufficiently small, provided that $m_o$ and $m_1$ are chosen such that $\underline{\varepsilon}  > 2(m_o+m_1)$. Furthermore, for $\underline{\varepsilon}  > 2(m_o+m_1)$, $p > 1$, $M:=M(\underline{\varepsilon})$ with $\lim_{\underline{\varepsilon}\to +\infty}M(\underline{\varepsilon})=+\infty$, and $\bar{\varepsilon}$ uniformly bounded, it holds that  $\lim_{\underline{\varepsilon}\to +\infty} \Omega(\underline{\varepsilon})=+\infty$, where 
$\Omega (\underline{\varepsilon}) := \sup \{\omega : \text{\eqref{omega_1}-\eqref{omega3} hold} \}$ represents a lower bound on the  size of the region of attraction. Finally, according to \eqref{sigma}, the decay rate $\sigma$ verifies $\sigma \to +\infty$ as $\underline{\varepsilon} \to +\infty$. However, it is important to note that while $\Omega(\underline{\varepsilon}) > 0$ for any $\underline{\varepsilon} > 0$, we have $\lim_{\underline{\varepsilon} \to 0} \Omega(\underline{\varepsilon}) = 0$. This means that although a high diffusion coefficient is not required, the region of attraction becomes smaller as the diffusion coefficient decreases.

The condition \eqref{omega3} specifies how large the control gain $M$ must be chosen to offset the effect of convection; notably, for any fixed value of $(\{\gamma_i\}_{i=2}^{n},\omega)$, \eqref{omega3} is satisfied for sufficiently large $M$. In contrast, condition \eqref{omega2} indicates how the convection coefficients constrain the size of the region of attraction. For a fixed value of $(\underline{\varepsilon},m_1,m_o,p,n)$, and assuming $\bar{\varepsilon}$ is independent of $\{\gamma_i\}_{i=1}^{n}$ and $\underline{\varepsilon}  > 2(m_o+m_1)$, $\Omega$ shrinks to zero as $\max \{\{\gamma_i^2\}_{i=1}^{n}\} \to +\infty$, and increases as $\max\{\{\gamma_i^2\}_{i=1}^{n}\}$ decays.

The influence of $p$ on $\Omega$ can be deduced from the condition \eqref{omega_1}. In particular, for a fixed value of $(\underline{\varepsilon},m_o,m_1)$ and for large enough $p$, we must have $\Omega \leq 1/6$.

Finally, while \eqref{omega_1}-\eqref{omega3} restrict, as discussed above, the choice of $M$, $m_o$, and $m_1$, the other gains $k_o,k_1\geq 0$ can be freely chosen. In some cases, it can be shown that, by letting $k_o>0$ or $k_1>0$, the inequality \eqref{decay_L2} on $|u|_{L^2}$ becomes strict; see Section IV and the simulation examples in the conference version \cite{conf}. 
\end{remark}
\begin{remark}
Interestingly, \eqref{omega_1} involves, after division by $\underline{\varepsilon}$, the \textit{Damköhler number} used in chemical kinetics \cite{dam}, given by  
$D_a := \omega ^{p-1}/\underline{\varepsilon}^2$, and which quantifies the relative magnitude of reaction compared to diffusion. In particular, since \eqref{omega2} requires that $\underline{\varepsilon} > 2(m_o+m_1)$, then \eqref{omega_1} imposes that $D_a<1$. In the standard (albeit informal) language of physics, the condition $D_a<1$ means that, for the initial conditions verifying $\kappa_o(u_o)\leq \sqrt{2\omega}$, the diffusion \textit{dominates} the reaction.

Similarly, \eqref{omega2} involves, after division by $\underline{\varepsilon}$, the \textit{Péclet number} \cite{peclet}, $P_e := \sum_{i=1}^{n}\left(\gamma_i(6\omega)^{i/2}/\underline{\varepsilon}\right)^2$,
which quantifies the relative magnitude of convection compared to diffusion. Condition \eqref{omega2} requires, in particular, that $P_e<1$, which in turn means that for the initial conditions verifying $\kappa_o(u_o)\leq \sqrt{2\omega}$, the diffusion dominates the convection. This result contrasts sharply with \cite{burgers}, where global stabilization of the viscous Burgers' equation, via Neumann cubic feedback, was achieved regardless of the values of the convection and diffusion coefficients. As we will see in the forthcoming proof of Theorem \ref{thm1}, the restriction on $P_e$ in our case comes from our analysis of the $H^1$ norm of $u$, which differs fundamentally from the analysis performed in \cite{burgers}, due to the presence of $u^p$.
\end{remark}

\begin{remark}
The role of the cubic terms in \eqref{v2}-\eqref{v1} is to enlarge our estimate of the region of attraction. Indeed, consider, e.g., the case where $\gamma_i = 0$ for all $i\in \{2,3,...,n\}$, and $\gamma_1\neq 0$. If we use a linear controller $(v_o(u(0)),v_1(u(1))):=(\tilde{\lambda}_ou(0),-\tilde{\lambda}_1u(1))$ instead of \eqref{v2}-\eqref{v1}, then we can show that Property \ref{prop1} holds with $\kappa_o(u_o)$ therein replaced by 
\begin{align*}
\tilde{\kappa}_o(u_o) := \left[|u_o|_{H^1}^2+\tilde{\lambda}_1u_o(1)^2+\tilde{\lambda}_ou_o(0)^2\right]^{1/2}, 
\end{align*}
$\tilde{\lambda}_l := \lambda_l + |\gamma_1|/(2\underline{\varepsilon})\tilde{M}^2$ for all $l\in \{0,1\}$, with $\tilde{M}\geq 0$ a control gain, and $\omega$ verifying \eqref{omega_1}-\eqref{omega2} and $\tilde{M}>\sqrt{6\omega}$. 
The latter condition implies that 
$$ \tilde{\lambda}_lu_o(l)^2 \geq \lambda_lu_o(l)^2 + \frac{\mu}{2}u_o(l)^4 \quad \forall l\in \{0,1\}.$$
In particular, if $\tilde{\kappa}_o(u_o)\leq \sqrt{2\omega}$, then $\kappa(u_o)\leq \sqrt{2\omega}$, while the converse is not necessarily true. The condition $\tilde{\kappa}_o(u_o)\leq \sqrt{2\omega}$ is therefore, in general, more restrictive than the condition $\kappa_o(u_o)\leq \sqrt{2\omega}$. 
\end{remark}

\begin{remark}
Existing well-posedness results for quasilinear parabolic PDEs--see the forthcoming Lemma \ref{ladyz}--cannot be directly applied to $\Sigma$. 
This is due to the superlinear reaction term $u^p$, the nonlinear convective terms $u^i u_x$, and the cubic terms $u(1)^3$ and $u(0)^3$ in \eqref{v1} and \eqref{v2}, respectively. Indeed, the latter terms prevent the first condition in Lemma \ref{ladyz} from being satisfied when applied to $\Sigma$. To overcome this issue, we will, roughly speaking, consider a \textit{cutoff version} of $\Sigma$, denoted $\bar{\Sigma}$, to which Lemma \ref{ladyz} applies. Furthermore, we exploit inequality \eqref{decay_max} to prove that, when $\kappa_o(u_o) \leq \sqrt{2\omega}$ and the compatibility condition \eqref{compatibility} holds, then the unique complete solution to $\bar{\Sigma}$ is also the unique complete solution to $\Sigma$; see Section \ref{well_pos_s}.
\end{remark}

\subsection{Further Results: One-sided boundary feedback}

We will shed light on some cases, where it is possible to guarantee Properties \ref{prop1}-\ref{prop3} by designing $v_1$ as in \eqref{v1} (resp., $v_o$ as in \eqref{v2}) while setting $v_o:=0$ (resp., $v_1:=0$).

\begin{proposition} \label{proposi1}
Consider system $\Sigma$ such that Assumption \ref{ass1} holds.  
Suppose that $\gamma_{k} := 0$ for all $k \in \{1,3, ... \}$, and $\gamma_{k}\geq 0$ (resp., $\gamma_{k}\leq 0$) for all $k\in \{2,4,...\}$. Then, by designing $v_1$ as in \eqref{v1} (resp., $v_o$ as in \eqref{v2}) while setting $v_o:=0$ (resp., $v_1:=0$), the first three bullets in Theorem \ref{thm1} remain verified.
\end{proposition}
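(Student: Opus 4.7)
The plan is to re-run the proof of Theorem \ref{thm1}, keeping the same Lyapunov structure but dropping the boundary penalties at the uncontrolled end. Consider the first case (\(\gamma_k \geq 0\) for even \(k\)), where we set \(v_o := 0\) and keep \(v_1\) as in \eqref{v1}; the second case follows by the spatial reflection \(x\mapsto 1-x\), which swaps the roles of the two boundaries and leaves \(\Sigma\) invariant up to a sign flip of the (vanishing) odd convection coefficients.

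The key observation is a sign-parity argument on the convection boundary terms. Multiplying \eqref{one} by \(u\) and integrating by parts, the convective contribution to \(\tfrac{d}{dt}\tfrac12|u|_{L^2}^2\) is
\[
\sum_{i\in\{2,4,\dots\}} \frac{\gamma_i}{i+2}\bigl[u(1)^{i+2}-u(0)^{i+2}\bigr],
\]
since the odd-indexed \(\gamma_i\) vanish. Because \(i+2\) is even and \(\gamma_i\ge 0\), the endpoint contribution at \(x=0\) is \emph{nonpositive} and can simply be discarded in the energy inequality. Likewise, the diffusion boundary term \(-\varepsilon(u(0))u(0)u_x(0)\) vanishes because \(u_x(0)=v_o(u(0))=0\). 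Hence, after integration by parts, the only boundary terms left at \(x=0\) in the \(L^2\) analysis are harmless, while those at \(x=1\) are handled \emph{verbatim} by the cubic feedback \(v_1\) exactly as in Theorem \ref{thm1}. This already yields \eqref{decay_L2} and \eqref{decay_max} (after replacing \(\kappa_o\) by the version without the \(\lambda_o u_o(0)^2+\tfrac{\mu}{2}u_o(0)^4\) terms).

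For the \(H^1\) analysis, the Lyapunov functional is modified to \(\tfrac12|u_x|_{L^2}^2+\tfrac{\lambda_1}{2}u(1)^2+\tfrac{\mu}{4}u(1)^4\), i.e., we drop the companion penalties at \(x=0\). Differentiating and using \(u_x u_{xt}\)-integration by parts produces the boundary quantity \([u_x u_t]_0^1\); the term at \(x=0\) vanishes identically because \(u_x(0)=0\) for all \(t\). Substituting \(u_t\) from \eqref{one} into the remaining \(-\int u_{xx}u_t\,dx\) and integrating the convection piece by parts once more, every new endpoint term at \(x=0\) either contains a factor of \(u_x(0)\) (which is zero) or has the form \(-\tfrac{\gamma_i}{j}u(0)^{j}\) for even \(j\), which is nonpositive under the sign assumption. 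Therefore the \(H^1\) computation of Theorem \ref{thm1} closes with the same inequalities, modulo setting \(\lambda_o=0\) and dropping the \(\mu u(0)^4\) terms in \(\bar\zeta\). The higher-regularity decay \eqref{max_ux} and the well-posedness argument (cutoff of \(\bar\Sigma\), then identification with \(\Sigma\) via \eqref{decay_max}) are then unchanged. Property \ref{prop3} also follows as in Theorem \ref{thm1}, since the homogeneous Neumann condition at \(x=0\) and \(v_1(u(1))\le 0\) for \(u(1)\ge 0\) are exactly the sign conditions required by the parabolic maximum principle to preserve positivity.

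The main technical obstacle is bookkeeping in the \(H^1\) step: one must verify that \emph{every} boundary term arising at \(x=0\) during the successive integrations by parts—including the cross terms produced by the \(u^p\) reaction and by differentiating \(\varepsilon(u)\)—either contains an \(u_x(0)\) factor or reduces to an even power of \(u(0)\) multiplied by a coefficient of the right sign. Once this inventory is complete, no cubic boundary penalty is needed at \(x=0\), and the bounds \eqref{decay_L2}--\eqref{max_ux} hold with the reduced \(\kappa_o\), so the first three bullets of Theorem \ref{thm1} survive intact.
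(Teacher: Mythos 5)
Your proposal is correct and follows essentially the same route as the paper: the crucial step in both is that, with the odd convection coefficients zero and the even ones of the appropriate sign, the boundary contribution of the convection at the uncontrolled end is an even power of $u(0)$ (resp.\ $u(1)$) with a sign-definite coefficient, so it can be discarded and $v_o$ (resp.\ $v_1$) set to zero, after which the proof of Theorem \ref{thm1} is rerun with the corresponding boundary penalties removed. Two small remarks: the paper's $H^1$ step bounds $-\gamma_i\int_0^1 u_{xx}u^iu_x\,dx$ directly by Young's inequality (Lemma \ref{lem2}), so the extra integration by parts and the attendant boundary bookkeeping you describe are unnecessary; and under the reflection $x\mapsto 1-x$ \emph{all} convection coefficients change sign (since $u_x\mapsto -u_x$), which is precisely what converts $\gamma_k\le 0$ into $\gamma_k\ge 0$ for even $k$ --- your parenthetical suggesting only the (vanishing) odd ones flip is inaccurate, though the reduction to the first case still goes through.
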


\begin{proof}
The only difference between the proof of Theorem \ref{thm1} and the proof of Proposition \ref{prop1} is the obtained upperbound of  $\dot{V}$, where $V :=
(1/2) |u|^2_{L^2}$. 
Indeed, using \eqref{ineq_V_1}, when $\gamma_{k}:=0$ for every $k$ odd, and $\gamma_{k} \geq 0$ for every $k$ even\footnote{We handle the case where $\gamma_{k} \leq 0$ for every $k$ even in a similar way.}, we can show that 
\begin{align}
\dot{V} &\leq 2|u|_{\infty}^{p-1}V- \bigg(\underline{\varepsilon}-|u|_{\infty}\bar{\varepsilon}(|u|_{\infty})\bigg)|u_x|_{L^2}^2 \nonumber \\
&~+u(1)\varepsilon(u(1))v_1(u(1))-u(0)\varepsilon (u(0))v_o(u(0)) \nonumber \\
&~+\sum_{0\leq 2k \leq n}\frac{\gamma_{2k}}{2(k+1)}u(1)^{2(k+1)}. \nonumber 
\end{align}
Due to the absence of polynomial terms in $u(0)$ in the latter inequality ---cf. \eqref{ineq_V_1}, we can set $v_o:=0$ and  verify 
\begin{align}
\dot{V} &\leq 2|u|_{\infty}^{p-1}V- \bigg(\underline{\varepsilon}-|u|_{\infty}\bar{\varepsilon}(|u|_{\infty})\bigg)|u_x|_{L^2}^2 \nonumber \\
&+u(1)\varepsilon(u(1))v_1(u(1))+\sum_{0\leq 2k \leq n}\frac{\gamma_{2k}}{2(k+1)}u(1)^{2(k+1)}. \nonumber 
\end{align}
From here, the proof of Proposition \ref{proposi1} follows the exact same steps as the proof of Theorem \ref{thm1}.
\end{proof}

\section{Proof of Theorem \ref{thm1}}

The next four sections are dedicated to the proof of the four properties in Theorem \ref{thm1}.

\subsection{Proof of Property \ref{prop1} (Stability and convergence)}

We start considering the Lyapunov functional candidate 
$$ V(u) := \frac{1}{2}|u|_{L^2}^2. $$ 
The next lemma, proved in the Appendix, upperbounds the variations of $V$ along $\Sigma$.  
\begin{lemma}\label{lem1}
Along $\Sigma$, we have
\begin{align*}
\hspace{-0.3cm} & \dot{V}  \leq -\bigg(m_1+m_o-2|u|_{\infty}^{p-1}\bigg)V -k_1 u(1)^2 - k_o u(0)^2
\\ \hspace{-0.3cm} &
- \bigg(\underline{\varepsilon}-|u|_{\infty}\bar{\varepsilon}(|u|_{\infty}) - 2(m_o+m_1)\bigg)|u_x|_{L^2}^2+ u(1) \varepsilon(u(1)) \times
\end{align*}
\begin{align*}
& 
\bigg[ v_1(u(1))  
+ \lambda_1 u(1) + \bigg( \frac{|\gamma_1|}{2 \underline{\varepsilon}} + \sum_{i=2}^{n}\frac{|\gamma_i | |u|_{\infty}^{i-2}}{\underline{\varepsilon}(i+2)}\bigg)u(1)^3\bigg]\\
& 
+ u(0)\varepsilon (u(0)) \times 
\\ \hspace{-0.3cm} & 
\bigg[ - v_o(u(0)) +\lambda_o u(0) + 
\bigg( \frac{|\gamma_1|}{2 \underline{\varepsilon}} + \sum_{i=2}^{n}\frac{|\gamma_i | |u|_{\infty}^{i-2}}{\underline{\varepsilon} (i+2)}\bigg)u(0)^3\bigg].
\end{align*}
\end{lemma}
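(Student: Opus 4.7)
The plan is to start from $\dot{V} = \int_0^1 u u_t\,dx$, substitute the right-hand side of \eqref{one}, and handle the three contributions—diffusion, convection, and reaction—separately, then regroup the boundary pieces to expose precisely the bracketed structure of the lemma.

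For the diffusion term I would integrate by parts once. The resulting boundary contributions, together with \eqref{two}, give exactly $u(1)\varepsilon(u(1))v_1(u(1)) - u(0)\varepsilon(u(0))v_o(u(0))$. The interior pieces split into $-\int \varepsilon(u)u_x^2\,dx$ and $-\int u\varepsilon'(u)u_x^2\,dx$; Assumption \ref{ass1} bounds these by $-\underline{\varepsilon}|u_x|_{L^2}^2$ and $|u|_\infty\bar{\varepsilon}(|u|_\infty)|u_x|_{L^2}^2$, respectively. The convection term integrates explicitly to pure boundary contributions $\sum_{i}\frac{\gamma_i}{i+2}\bigl(u(1)^{i+2}-u(0)^{i+2}\bigr)$ because $u^{i+1}u_x = \frac{1}{i+2}(u^{i+2})_x$. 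The reaction integral is absorbed through $\int u^{p+1}\,dx \leq |u|_\infty^{p-1}|u|_{L^2}^2 = 2|u|_\infty^{p-1}V$.

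The next step is to absorb the convection boundary residues into the cubic structure of the lemma's bracket. For each $i\geq 2$, the factorization $u(l)^{i+2}=u(l)^4 \cdot u(l)^{i-2}$, combined with $|u(l)|^{i-2}\leq |u|_\infty^{i-2}$ and $\varepsilon(u(l))\geq \underline{\varepsilon}$, shows that $\frac{|\gamma_i|}{i+2}u(l)^{i+2}$ is dominated by $u(l)\varepsilon(u(l))\cdot\frac{|\gamma_i||u|_\infty^{i-2}}{\underline{\varepsilon}(i+2)}u(l)^3$, which matches a term in the bracket. The $i=1$ case is the subtle one, because the same factorization would require $|u|_\infty^{-1}$; instead I would use the AM-GM inequality $|u(l)|^3\leq \tfrac{1}{2}(u(l)^2+u(l)^4)$ to split the cubic boundary convection into a quartic piece (absorbed by $\frac{|\gamma_1|}{2\underline{\varepsilon}}u(l)^3$ inside the bracket, using $\varepsilon(u(l))\geq\underline{\varepsilon}$) and a quadratic piece in $u(l)^2$.

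To expose the bracket structure, I add and subtract $\lambda_l\varepsilon(u(l))u(l)^2$ and the cubic-boundary term, obtaining the expressions appearing in the lemma plus residuals $-\lambda_l\varepsilon(u(l))u(l)^2$. Using the explicit formula $\lambda_l = \tfrac{2(m_l+k_l)+|\gamma_1|}{2\underline{\varepsilon}}$ and $\varepsilon(u(l))\geq\underline{\varepsilon}$, this residual dominates $(m_l+k_l)u(l)^2+\tfrac{|\gamma_1|}{2}u(l)^2$; the $\tfrac{|\gamma_1|}{2}u(l)^2$ component is exactly what is needed to cancel the quadratic leftover of the $i=1$ convection step, leaving the clean boundary penalty $-(m_l+k_l)u(l)^2$.

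The final step is to convert the boundary penalties $m_l u(l)^2$ into an $V$-term and an $|u_x|_{L^2}^2$-term. I would derive the elementary trace/Agmon-type bound $V\leq u(l)^2+2|u_x|_{L^2}^2$ for each $l\in\{0,1\}$, starting from $u(x)^2 = u(l)^2+2\int_l^x u u_y\,dy$, integrating in $x$ to get $|u|_{L^2}^2\leq u(l)^2+2|u|_{L^2}|u_x|_{L^2}$, and then applying Young's inequality with weight $\tfrac{1}{2}$. Using this bound once at each endpoint trades $-m_1u(1)^2-m_o u(0)^2$ for $-(m_1+m_o)V+2(m_1+m_o)|u_x|_{L^2}^2$. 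Collecting everything with the reaction bound $2|u|_\infty^{p-1}V$ and the diffusion bound $-(\underline{\varepsilon}-|u|_\infty\bar{\varepsilon}(|u|_\infty))|u_x|_{L^2}^2$ produces the stated inequality. The main bookkeeping obstacle is the $i=1$ convection term, which—as noted—does not fit the factorization strategy used for $i\geq 2$ and therefore forces the specific $|\gamma_1|/(2\underline{\varepsilon})$ weight appearing in $\lambda_l$.
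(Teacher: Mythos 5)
Your proposal is correct and follows essentially the same route as the paper's proof: integration by parts on the diffusion and convection terms, the bound $\int_0^1 u^{p+1}dx\le 2|u|_\infty^{p-1}V$ for the reaction, Young/AM--GM to handle the $\gamma_1 u(l)^3$ boundary residue (which is exactly why $|\gamma_1|/(2\underline{\varepsilon})$ enters $\lambda_l$ and $\mu$), the factorization $u(l)^{i+2}=u(l)^{i-2}u(l)^4$ with $\varepsilon\ge\underline{\varepsilon}$ for $i\ge 2$, and the Poincar\'e-type trade $-m_lu(l)^2\le -m_lV+2m_l|u_x|_{L^2}^2$ (which you re-derive rather than cite, but it is the same inequality as in the paper's appendix).
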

Under our choice of $(v_o, v_1)$ in \eqref{v2}-\eqref{v1},  respectively, the latter inequality  becomes
\begin{align}
\dot{V} & \leq -\bigg(m_1+m_o-2|u|_{\infty}^{p-1}\bigg)V -k_1 u(1)^2-k_ou(0)^2 \nonumber 
\\ & - \bigg(\underline{\varepsilon}-|u|_{\infty}\bar{\varepsilon}(|u|_{\infty})   -2(m_o+m_1)\bigg)|u_x|_{L^2}^2 \nonumber \\
&-\varepsilon(u(1))\bigg[ \sum_{i=2}^{n}\frac{|\gamma_i|}{i+2}\bigg(M^{i-2}-|u|_{\infty}^{i-2}\bigg)\bigg] u(1)^4 \nonumber \\ 
&- \varepsilon(u(0))\bigg[ \sum_{i=2}^{n}\frac{|\gamma_i|}{i+2}\bigg(M^{i-2}-|u|_{\infty}^{i-2}\bigg)\bigg] u(0)^4. \label{ineq_V_fin2}  
\end{align}
In view of \eqref{ineq_V_fin2}, to assess the behavior of $V$, we also need to analyze the behavior of $|u|_{\infty}$. Using Agmon's inequality, we conclude that 
\begin{align}
|u|_{\infty}^2 \leq 2\sqrt{2V|u_x|_{L^2}^2}+2V. \label{agmon_used}
\end{align}
Consequently, 
$|u|_{\infty}$ is bounded if 
$V$ and $|u_x|_{L^2}$ are bounded. Hence, we introduce the functional 
\begin{equation}
\label{H_def}
\begin{aligned}
H(u) & := \frac{1}{2} |u_x|_{L^2}^2 + \frac{\lambda_1 }{2} u(1)^2+ \frac{\mu}{4}u(1)^4 
\\ &
+ \frac{\lambda_o}{2}u(0)^2+ \frac{\mu}{4}u(0)^4. 
\end{aligned}
\end{equation}
The next lemma, proved in the Appendix, upperbounds the variations of $H$ along $\Sigma$.  
\begin{lemma} \label{lem2}
Along $\Sigma$, it holds that 
\begin{align}
\dot{H}\leq \frac{n}{2\underline{\varepsilon}}\bigg(\sum_{i=1}^{n}\gamma_i^2|u|_{\infty}^{2i}\bigg)|u_x|_{L^2}^2 + \frac{|u|_{\infty}^{2p-2}}{\underline{\varepsilon}}V. \label{ineq_H_to_use}
\end{align}
\end{lemma}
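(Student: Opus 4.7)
The plan is to differentiate $H$ in time, integrate by parts to pull the boundary terms out of the $|u_x|_{L^2}^2$ contribution, and then exploit the fact that $H$ has been \emph{tailored} so that the boundary contributions cancel exactly against the boundary conditions \eqref{v2}--\eqref{v1}. Specifically, differentiating \eqref{H_def} gives
\begin{align*}
\dot{H} &= \int_0^1 u_x u_{xt}\,dx + \bigl[\lambda_1 u(1) + \mu u(1)^3\bigr] u_t(1) \\ & \quad +\bigl[\lambda_o u(0) + \mu u(0)^3\bigr] u_t(0),
\end{align*}
and integration by parts in the first term yields a boundary contribution $u_x(1)u_t(1) - u_x(0)u_t(0)$. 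Substituting $u_x(1) = -\lambda_1 u(1) - \mu u(1)^3$ and $u_x(0) = \lambda_o u(0) + \mu u(0)^3$ from \eqref{v2}--\eqref{v1} makes all boundary contributions telescope to zero, leaving $\dot{H} = -\int_0^1 u_{xx}\, u_t\, dx$.

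Next I would replace $u_t$ using \eqref{one}, splitting $\dot{H}$ into the three summands
\begin{align*}
\dot{H} &= -\!\int_0^1 \varepsilon(u) u_{xx}^2\,dx - \sum_{i=1}^n \gamma_i \!\int_0^1 u^i u_x u_{xx}\,dx \\ & \quad - \!\int_0^1 u^p u_{xx}\,dx,
\end{align*}
and estimate them so the $u_{xx}^2$ mass cancels exactly. For the diffusion term, Assumption \ref{ass1} gives $-\int \varepsilon(u)u_{xx}^2 \leq -\underline{\varepsilon}\int u_{xx}^2$. For each convective summand I would apply Young's inequality in the form $|\gamma_i u^i u_x u_{xx}|\leq \frac{n\gamma_i^2 u^{2i} u_x^2}{2\underline{\varepsilon}}+\frac{\underline{\varepsilon}}{2n}u_{xx}^2$, so that summing over $i$ absorbs $\frac{\underline{\varepsilon}}{2}\int u_{xx}^2$ of the diffusive budget and contributes $\frac{n}{2\underline{\varepsilon}}\bigl(\sum\gamma_i^2 u^{2i}\bigr) u_x^2$ in the remainder. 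For the reactive term I would use $|u^p u_{xx}|\leq \frac{u^{2p}}{2\underline{\varepsilon}}+\frac{\underline{\varepsilon}}{2}u_{xx}^2$, which consumes the remaining $\frac{\underline{\varepsilon}}{2}\int u_{xx}^2$ exactly.

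Putting these three pieces together, the coefficient of $\int u_{xx}^2$ is nonpositive (in fact zero), so it may be dropped, and one is left with
\begin{align*}
\dot{H}\leq \frac{n}{2\underline{\varepsilon}}\!\sum_{i=1}^n \gamma_i^2 \!\int_0^1\! u^{2i}u_x^2\,dx + \frac{1}{2\underline{\varepsilon}}\!\int_0^1\! u^{2p}\,dx.
\end{align*}
Finally, I would pointwise bound $u^{2i}\leq |u|_\infty^{2i}$ in the convective sum and $u^{2p}=u^{2p-2}\cdot u^2\leq |u|_\infty^{2p-2}u^2$ in the reactive term, then recognize $\int u^2\,dx = 2V$ to reach \eqref{ineq_H_to_use}.

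The main conceptual obstacle is spotting the boundary cancellation of step one, which is really the whole reason for including the weighted boundary terms $\frac{\lambda_l}{2}u(l)^2 + \frac{\mu}{4}u(l)^4$ in $H$; the rest is a delicate but routine Young's-inequality bookkeeping to ensure that the convective and reactive $u_{xx}$ contributions exactly saturate the $\underline{\varepsilon}\int u_{xx}^2$ budget supplied by diffusion, so that no residual $|u_{xx}|_{L^2}^2$ term survives on the right-hand side.
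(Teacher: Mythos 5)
Your proof follows essentially the same route as the paper: the boundary terms in $H$ are designed to cancel the boundary contribution from integrating $\int_0^1 u_xu_{xt}\,dx$ by parts, leaving $\dot H=-\int_0^1 u_{xx}u_t\,dx$, after which the same Young's-inequality splitting ($\underline{\varepsilon}/(2n)$ of the $|u_{xx}|_{L^2}^2$ budget per convective term, $\underline{\varepsilon}/2$ for the reaction) exactly exhausts the diffusive $-\underline{\varepsilon}|u_{xx}|_{L^2}^2$. The one step you gloss over is the identity $\tfrac12\tfrac{d}{dt}|u_x|_{L^2}^2=\int_0^1 u_xu_{xt}\,dx$: classical solutions in $\mathcal{H}^{2+\beta,1+\beta/2}_{loc}$ need not have a continuous (or even well-defined) $u_{xt}$, which is why the paper proves $\dot H=-\int_0^1 u_{xx}u_t\,dx$ (its Claim~1) via a difference-quotient argument with $H_h(t):=\tfrac12\big|\tfrac1h\int_t^{t+h}u_x(\cdot,\tau)\,d\tau\big|_{L^2}^2$, l'H\^opital's rule, and dominated convergence, using only the continuity of $u_x$, $u_{xx}$, and $u_t$. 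Aside from this regularity caveat, your computation is correct and matches the paper's.
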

Now, for $E := V+H$, using \eqref{agmon_used}, we conclude that $|u|_{\infty}^2 \leq 6E$. 
Hence, combining  \eqref{ineq_V_fin2} and \eqref{ineq_H_to_use}, we obtain
\begin{equation}
\label{E_ineq_use}
\begin{aligned}
\dot{E} & \leq - \alpha (E)V-\Lambda (E)|u_x|_{L^2}^2
 \\ &
-\varepsilon(u(1))\Gamma(E)u(1)^4 -\varepsilon (u(0))\Gamma(E)u(0)^4,
\end{aligned}
\end{equation}
where
\begin{align*}
\alpha(E) :=&~ m_1+m_o-2(6E)^{(p-1)/2} - (6E)^{p-1}/\underline{\varepsilon},\\
\Lambda (E) :=&~ \underline{\varepsilon}-\sqrt{6E}\bar{\varepsilon}\big(\sqrt{6E}\big) -2(m_o+m_1) 
\\ &
- n \bigg(\sum_{i=1}^{n}\gamma_i^2(6E)^{i}\bigg) / (2\underline{\varepsilon}), \\
\Gamma(E) :=&~ \sum_{i=2}^{n}\frac{|\gamma_i|}{i+2}\bigg(M^{i-2}-(6E)^{(i-2)/2}\bigg).
\end{align*}
We will now exploit \eqref{E_ineq_use} to show that, if $E(0)$ is sufficiently small, then
\begin{align}
E(t) \leq E(0) \qquad \forall t\geq 0. \label{E_ineq}
\end{align}
To do so, we start noting that  
\begin{itemize}
\item When $\gamma_i\neq 0$ for some $i\in \{2,3,...,n\}$, then, based on  \eqref{omega_1}-\eqref{omega3}, we have 
\begin{align}
\min 
\left\{ \alpha(E(0)), \Lambda(E(0)), \Gamma(E(0)) \right\} >0. \label{init_condi}
\end{align}
\item When $\gamma_i=0$ for all 
$i \in \{2,3,...,n\}$, then 
\begin{align}
\min 
\left\{ \alpha(E(0)), \Lambda(E(0)) \right\} > 0. \label{init_condibis}
\end{align}
\end{itemize}
 Next, we show that, in the first case, 
\begin{align}
\left\{ \alpha(E(t)),\Lambda (E(t)), \Gamma(E(t)) \right\} > 0 \quad \forall t\geq 0. \label{to_contradict}
\end{align}
While, in the second case,
\begin{align}
\min \left\{ \alpha(E(t)),\Lambda (E(t)) \right\} > 0 \quad \forall t\geq 0. \label{to_contradictbis}
\end{align}
This would be enough to verify \eqref{E_ineq} according to \eqref{E_ineq_use}. 

We show \eqref{to_contradict} using contradiction. \eqref{to_contradictbis} can be proven in the same way. 

Suppose that \eqref{to_contradict} is false. Hence, by continuity of  $t\mapsto E(t)$, $E \mapsto \alpha(E)$, $E \mapsto \Lambda(E)$, and $E\mapsto \Gamma(E)$, we conclude the existence of $t' < +\infty$ such that 
\begin{align*} 
\min \left\{ \alpha(E(t')),  \Lambda (E(t')), \Gamma(E(t')) \right\} & = 0,  
\\
 \min \left\{ \alpha(E(t'')),\Lambda (E(t'')), \Gamma(E(t'')) \right\} & > 0 \quad \forall t'' \in [0,t').  
 \end{align*}
Now, since $\alpha$, $\Lambda$, and $\Gamma$ are nonincreasing, then there exists a time interval $(t_1,t_2)\subset [0,t')$ with $t_1<t_2$ such that $\dot{E}>0$ on $(t_1,t_2)$. However, according to \eqref{E_ineq_use}, we have $\dot{E}\leq 0$ on $(t_1,t_2)$, which yields a contradiction. 

Going back to \eqref{ineq_V_fin2}, using \eqref{E_ineq} and \eqref{omega_1}-\eqref{omega3}, we obtain
\begin{equation}
\label{V_to_integrate}
\begin{aligned}
\dot{V}  \leq & - \bigg(m_1+m_o-2(6E(0))^{(p-1)/2}\bigg)V  
\\ & 
- k_1u(1)^2-k_ou(0)^2. 
\end{aligned}
\end{equation}
By integrating \eqref{V_to_integrate} and noting that $E(0)=(1/2)\kappa_o(u_o)^2$, we conclude that \eqref{decay_L2} holds.

Using \eqref{agmon_used}, \eqref{decay_L2}, and the fact that $|u_x|_{L^2}^2\leq 2E \leq 2E(0) = \kappa_o(u_o)^2$, we also verify \eqref{decay_max}.

At this point, to verify \eqref{decay_H1}, we study the $H^1$ norm of $u$. Note that
\begin{align*}
\frac{d}{dt}\bigg( \exp^{(\sigma/3)t}H \bigg) = \frac{\sigma}{3}\exp^{(\sigma/3)t}H + \exp^{(\sigma/3)t}\dot{H}.
\end{align*}
Hence, using \eqref{ineq_H_to_use}, we have
\begin{align}
\frac{d}{dt}\bigg( \exp^{(\sigma/3)t}&H \bigg) \leq \frac{\sigma}{3}\exp^{(\sigma/3)t}H +\exp^{(\sigma/3)t} |u|_{\infty}^{2p-2}V/\underline{\varepsilon}  \nonumber \\
&~+ \exp^{(\sigma/3)t}\left(\frac{n}{2\underline{\varepsilon}}\right)\bigg(\sum_{i=1}^{n}\gamma_i^2|u|_{\infty}^{2i}\bigg)|u_x|_{L^2}^2. \nonumber
\end{align}
Furthermore, since $t\mapsto E(t)$ is non-increasing, then 
\begin{align}
|u|_{\infty}\leq \sqrt{6E(0)} = \sqrt{3}\kappa_o(u_o) \leq \sqrt{6\omega}. \label{to_bound_use}
\end{align}
Together with \eqref{decay_L2}, we obtain
\begin{equation}
\label{to_rewrite}
\begin{aligned}
\frac{d}{dt} & \bigg( \exp^{(\sigma/3)t} H \bigg) \leq \frac{\sigma}{3}\exp^{(\sigma/3)t}H  \\
&~+ \bigg(\sum_{i=1}^{n}3^i\gamma_i^2 \kappa_o^{2i}\bigg)\left(\frac{n}{2\underline{\varepsilon}}\right)\exp^{(\sigma/3)t}|u_x|_{L^2}^2  \\
&~+\frac{3^{p-1}\kappa_o^{2p-2}}{\underline{\varepsilon}}V(0)\exp^{-2(\sigma/3)t}. 
\end{aligned}
\end{equation}
Next, using \eqref{H_def}, inequality \eqref{to_rewrite} can be rewritten as 
\begin{align}
&\frac{d}{dt}\bigg( \exp^{\frac{\sigma}{3}t}H \bigg) \leq \frac{3^{p-1}\kappa_o^{2p-2}}{\underline{\varepsilon}}V(0)\exp^{-2\frac{\sigma}{3}t}+\frac{\sigma}{3}\exp^{\frac{\sigma}{3}t} \times  \nonumber
\\ & \bigg[ \frac{\lambda_1}{2}u(1)^2 + \frac{\mu}{4}u(1)^4+ \frac{\lambda_o}{2}u(0)^2+ \frac{\mu}{4}u(0)^4\bigg] 
\nonumber \\
&~+ \bigg[\frac{\sigma}{6} + \bigg(\sum_{i=1}^{n}3^i\gamma_i^2 \kappa_o^{2i}\bigg)\left(\frac{n}{2\underline{\varepsilon}}\right)\bigg] \exp^{(\sigma/3)t}|u_x|_{L^2}^2. \label{to_rewrite2}
\end{align} 
Now, using \eqref{decay_max}, the latter inequality becomes
\begin{equation}
\label{to_integrate_H}
\begin{aligned}
&\frac{d}{dt}\bigg( \exp^{(\sigma/3)t}H \bigg) \leq 
\frac{\sigma(\lambda_1+\lambda_o)}{3}|u_o|_{L^2} \kappa_o \exp^{-(\sigma/6)t}  
\\ & +\bigg[\frac{\sigma(\lambda_1+\lambda_o)}{6} + \frac{2\mu \sigma}{3}\kappa_o^2\bigg]|u_o|_{L^2}^2\exp^{-2(\sigma/3) t}  
\\ & +\frac{2\mu \sigma}{3}|u_o|_{L^2}^3\kappa_o \exp^{-7(\sigma/6)t} + \frac{\mu \sigma}{6}|u_o|_{L^2}^4\exp^{-5(\sigma/3) t} 
\\ & + \bigg[\frac{\sigma}{6}+\bigg(\sum_{i=1}^{n}3^i\gamma_i^2 \kappa_o^{2i}\bigg)\left(\frac{n}{2\underline{\varepsilon}}\right)\bigg] \exp^{(\sigma/3)t}|u_x|_{L^2}^2 
\\ & +\frac{3^{p-1}\kappa_o^{2p-2}}{\underline{\varepsilon}}V(0)\exp^{-2(\sigma/3)t}. 
\end{aligned}
\end{equation}
Next, we show that
\begin{align}
\int_{0}^{t}\exp^{(\sigma/3)s}|u_x(\cdot,s)|_{L^2}^2ds \leq \frac{3}{2\zeta}V(0). \label{L1H1}
\end{align}
To do so, we first note that
\begin{align}
\frac{d}{dt}\left( \exp^{(\sigma/3)t}V \right) =&~ \frac{\sigma}{3}\exp^{(\sigma/3)t}V + \exp^{(\sigma/3)t}\dot{V}. \label{to_combine_high1}
\end{align}
Since $t\mapsto E(t)$ is non-increasing, then \eqref{to_bound_use} holds. Consequently, using \eqref{omega3}, we obtain $\sum_{i=2}^{n}\frac{|\gamma_i|}{i+2}\big(M^{i-2}-|u|_{\infty}^{i-2}\big) \geq 0$. Hence,  according to \eqref{ineq_V_fin2}, we obtain
\begin{align}
\dot{V} \leq&~ -\sigma V - \zeta|u_x|_{L^2}^2, \label{to_combine_high2}
\end{align}
where $\zeta := \underline{\varepsilon} - \sqrt{3}\kappa_o\bar{\varepsilon}\left(\sqrt{3}\kappa_o\right)-2(m_o+m_1)$.

Combining \eqref{to_combine_high1} and \eqref{to_combine_high2}, and using \eqref{decay_L2}, we obtain
\begin{align*}
\frac{d}{dt} \left( \exp^{(\sigma/3)t}V \right) & \leq  \frac{\sigma}{3}\exp^{-2(\sigma/3)t}V(0)  -\exp^{(\sigma/3)t}\zeta|u_x|_{L^2}^2. 
\end{align*}
Integrating the latter between $0$ and $t\geq 0$, we obtain 
\begin{align*}
\exp^{(\sigma/3)t}V(t)\leq \frac{3}{2}V(0)-\zeta\int_{0}^{t}\exp^{(\sigma/3)s}|u_x(\cdot,s)|_{L^2}^2ds,
\end{align*}
which implies \eqref{L1H1}.

Finally,  integrating \eqref{to_integrate_H} from $0$ to $t\geq 0$ and using \eqref{L1H1}, we obtain 
\begin{align*}
&\exp^{(\sigma/3)t}H(t) \leq H(0)+ 2(\lambda_1+\lambda_o)|u_o|_{L^2} \kappa_o \nonumber \\
&~+\bigg(\frac{\lambda_1+\lambda_o}{4}+\mu\kappa_o^2\bigg)|u_o|_{L^2}^2+\frac{4\mu}{7}|u_o|_{L^2}^3\kappa_o +\frac{\mu}{10}|u_o|_{L^2}^4\nonumber \\
& + \bigg[ \frac{\sigma}{4 \zeta}+
\bigg(\sum_{i=1}^{n}3^i\gamma_i^2 \kappa_o^{2i}\bigg)\left(\frac{3n}{4 \zeta \underline{\varepsilon}}\right)
 +\frac{3^{p}\kappa_o^{2p-2}}{2\underline{\varepsilon}\sigma} \bigg] V(0). 
\end{align*}
Hence, \eqref{decay_H1} follows while noting that $\zeta \geq \underline{\varepsilon} - \sqrt{6\omega} \bar{\varepsilon}\left(\sqrt{6\omega}\right)-2(m_o+m_1)$ and $\sigma \geq m_1+m_o-2(6\omega)^{(p-1)/2}$.

To verify \eqref{max_ux}, we start by proving that 
\begin{align}
\lim_{t\to +\infty} \exp^{(\sigma/4)t}|u_{xx}(\cdot,t)|_{L^2}^2 = 0. \label{u_xx_int}
\end{align}
To do so, note that
\begin{align*}
\frac{d}{dt}\left(\exp^{(\sigma/4)t}H\right) = \frac{\sigma}{4}\exp^{(\sigma/4)t}H+\exp^{(\sigma/4)t}\dot{H}.
\end{align*}
In view of \eqref{final_lem2} in the Appendix and using Young's inequality, we obtain 
\begin{align*}
\dot{H} \leq -\frac{\underline{\varepsilon}}{4}|u_{xx}|_{L^2}^2+\frac{2}{\underline{\varepsilon}}|u|_{\infty}^{2(p-1)}V+\frac{n}{2\underline{\varepsilon}}\sum_{i=1}^{n}\gamma_i^2|u|_{\infty}^{2i}|u_x|_{L^2}^2.
\end{align*}
As a result, we obtain
\begin{align*}
\frac{d}{dt}&\left(\exp^{(\sigma/4)t}H\right) \leq \frac{\sigma}{4}\exp^{(\sigma/4)t}H+\exp^{\frac{\sigma}{4}t}\frac{2}{\underline{\varepsilon}}|u|_{\infty}^{2(p-1)}V \\
&~+\exp^{(\sigma/4)t}\frac{n}{2\underline{\varepsilon}}\sum_{i=1}^{n}\gamma_i^2|u|_{\infty}^{2i}|u_x|_{L^2}^2-\frac{\underline{\varepsilon}}{4}\exp^{(\sigma/4)t}|u_{xx}|_{L^2}^2.
\end{align*}
Integrating both sides of the latter inequality while using \eqref{decay_L2}, \eqref{decay_H1}, and \eqref{to_bound_use}, we get
\begin{equation*}
\begin{aligned}
\int_{0}^{+\infty} \exp^{(\sigma/4)t} & |u_{xx}(\cdot,t)|_{L^2}^2dt < +\infty,
\end{aligned}
\end{equation*}
which further implies \eqref{u_xx_int}.

Now, to establish 
\begin{align}
\lim_{t\to +\infty} \exp^{(\sigma/4)t}|u_t(\cdot,t)|_{L^2}^2 = 0, \label{int_ut_final}
\end{align}
we use \eqref{one} to obtain
\begin{align*}
|u_t|_{L^2}^2= \int_{0}^{1}\bigg(\varepsilon(u)u_{xx}+\sum_{i=1}^{n}\gamma_iu^iu_x+u^p\bigg)^2dx,
\end{align*}
which implies that 
\begin{equation}
\label{u_t_bound}
\begin{aligned}
|u_t|_{L^2}^2 & \leq 2 \max_{|s|\leq \sqrt{2}\kappa_o} \left\{ \varepsilon (s)^2 \right\} |u_{xx}|_{L^2}^2 
\\ & + 
4 \sum_{i=1}^{n}\gamma_i^2|u|_{\infty}^{2i}|u_x|_{L^2}^2   
+ 2|u|_{\infty}^{2(p-1)}|u|_{L^2}^2. 
\end{aligned}
\end{equation}
Hence, \eqref{int_ut_final} follows  from \eqref{u_t_bound}, \eqref{decay_L2}, \eqref{decay_max}, \eqref{decay_H1}, and \eqref{u_xx_int}.

Finally, to verify 
\begin{align*}
\lim_{t\to +\infty} \exp^{(\sigma/4)t}|u_x(\cdot,t)|_{\infty}^2 = 0, 
\end{align*}
it is enough to use Agmon's inequality, which entails 
\begin{align*}
|u_x|_{\infty}^2 \leq |u_x|_{L^2}^2 + 2|u_x|_{L^2}|u_{xx}|_{L^2},
\end{align*}
combined with \eqref{decay_H1} and \eqref{u_xx_int}.

\subsection{Proof of  Property \ref{prop2} (Well-posedness)} \label{well_pos_s}

Consider the following equation 
\begin{equation*}
\bar{\Sigma} \ : \ \left\lbrace
\begin{aligned}
&u_t = \tilde{\varepsilon} (u)u_{xx} + \bar{\gamma}(u) u_x + \bar{f}(u), \\
&u_x(0) = \bar{v}_o(u(0)), \ \ u_x(1) = \bar{v}_1(u(1)), \\
&u(x,0) = u_o(x).
\end{aligned}
\right.
\end{equation*}
The different terms in $\bar{\Sigma}$ are given by 
\begin{equation*}
\begin{aligned}
\tilde{\varepsilon}(u) &:= \Phi(u)\varepsilon(u)+1-\Phi(u), 
\\
\bar{\gamma}(u) &:= \Phi(u)\bigg(\sum_{i=1}^{n}\gamma_i u^i\bigg), \ \ \bar{f}(u) := \Phi(u) u^p, \\
\bar{v}_1(u) &:= \Phi(u) v_1(u), \ \ \bar{v}_o(u) := \Phi(u)v_o(u),
\end{aligned}
\end{equation*}
where $\Phi : \mathbb{R}\to [0,1]$ is of class $\mathcal{C}^2$ and verifies
\begin{equation*}
\Phi (u) := 
\left\lbrace 
\begin{aligned}
&1 \quad \text{if $|u|\leq \mathcal{M}$}, \\
&0 \quad \text{if $|u|\geq \mathcal{M}+1$},
\end{aligned}
\right. 
\end{equation*}
and 
$\mathcal{M} := \sqrt{4\sqrt{V(u_o)E(u_o)}+2V(u_o)} + \delta_{\mathcal{M}}$,
for some constant $\delta_{\mathcal{M}}>0$.

The solutions to $\bar{\Sigma}$ are defined analogously to the solutions to $\Sigma$; see Definition \ref{def1}.

We will now exploit the following lemma, proved in the Appendix, along with inequality \eqref{decay_max} to prove the well-posedness of $\Sigma$.
 
\begin{lemma}\label{bar}
Suppose that the compatibility condition \eqref{compatibility} holds. Then, Property \ref{prop2} is verified for $\bar{\Sigma}$. \hfill $\square$
\end{lemma}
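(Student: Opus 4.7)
The plan is to invoke the standard local well-posedness theorem for quasilinear parabolic equations (Lemma \ref{ladyz}) applied to $\bar{\Sigma}$, and then upgrade the local solution to a global one by exploiting the fact that, thanks to the cutoff $\Phi$, all nonlinearities appearing in $\bar{\Sigma}$ are globally bounded and smooth.

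\textbf{Step 1 (Regularity of the coefficients of $\bar{\Sigma}$).} Since $\Phi \in \mathcal{C}^2$ is supported in $[-\mathcal{M}-1, \mathcal{M}+1]$, and $\varepsilon \in \mathcal{C}^2$ by assumption, the diffusion coefficient $\tilde{\varepsilon}(u) = \Phi(u)\varepsilon(u) + 1-\Phi(u)$ is of class $\mathcal{C}^2$ and satisfies $\tilde{\varepsilon}(u) \geq \min\{\underline{\varepsilon},1\} > 0$ for all $u \in \mathbb{R}$, so $\bar{\Sigma}$ is uniformly parabolic on the whole real line. Similarly, $\bar{\gamma}, \bar{f}, \bar{v}_o, \bar{v}_1$ are of class $\mathcal{C}^2$ with compact support, hence globally bounded together with their derivatives. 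I would check one by one that these regularity properties match the hypotheses required by Lemma \ref{ladyz} for a scalar quasilinear parabolic PDE on $[0,1]$ with Neumann-type boundary conditions.

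\textbf{Step 2 (Compatibility and local existence).} Using Agmon's inequality, the choice of $\mathcal{M}$ implies $|u_o|_{\infty}^2 \leq 4\sqrt{V(u_o)E(u_o)} + 2V(u_o) < \mathcal{M}^2$, so $\Phi \equiv 1$ in a neighborhood of $u_o([0,1])$. Consequently, $\bar{v}_o(u_o(0)) = v_o(u_o(0))$ and $\bar{v}_1(u_o(1)) = v_1(u_o(1))$, which means the $0$th-order compatibility condition \eqref{compatibility} transfers verbatim from $\Sigma$ to $\bar{\Sigma}$. Lemma \ref{ladyz} then yields a unique classical solution $u \in \mathcal{H}^{2+\beta,1+\beta/2}([0,1]\times[0,T_m))$ on a maximal interval $[0, T_m)$.

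\textbf{Step 3 (Global extension).} This is the main step. I would argue by contradiction: suppose $T_m < +\infty$. Because $\tilde{\varepsilon}$ is bounded above and below by positive constants independent of $u$, and because $\bar{\gamma}$, $\bar{f}$, $\bar{v}_o$, $\bar{v}_1$ are globally bounded along with their first derivatives, the classical $L^\infty$ maximum-principle/Schauder-type a priori bounds for uniformly parabolic equations with bounded coefficients (and bounded Neumann data) yield uniform bounds on $|u(\cdot,t)|_\infty$, $|u_x(\cdot,t)|_\infty$, $|u_{xx}(\cdot,t)|_\infty$, and $|u_t(\cdot,t)|_\infty$ on $[0, T_m)$, depending only on $u_o$ and on the sup norms of the cutoff-modified nonlinearities. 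These bounds contradict the maximality of $T_m$, forcing $T_m = +\infty$. Uniqueness and the fact that every solution is a restriction of the complete one then follow from a standard energy estimate on the difference of two solutions with identical initial data, using that $\tilde{\varepsilon}$, $\bar{\gamma}$, $\bar{f}$, $\bar{v}_o$, $\bar{v}_1$ are globally Lipschitz in $u$.

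The main obstacle is Step 3: one has to ensure that the constants produced by the Schauder bootstrap depend only on the uniform bounds of the cutoff-modified coefficients and not on an a priori bound on $u$ itself. This is precisely what the cutoff $\Phi$ was designed to enable—removing the superlinear growth of $u^p$, $u^i u_x$, and the cubic boundary terms—so the bootstrap closes and global existence follows.
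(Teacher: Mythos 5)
Your proposal is correct in substance but takes a genuinely different route for the global part. The paper's proof is nothing more than a verification of the hypotheses of Lemma \ref{ladyz} (the quoted Ladyzhenskaya--Solonnikov--Ural'tseva theorem), which is already a \emph{global} well-posedness statement: one checks the uniform parabolicity bound \eqref{to_show_1} by the same case analysis on $\Phi(u)$ that you sketch in Step 1, the growth conditions \eqref{to_show_2}--\eqref{to_show_4} by Young's inequality together with the global boundedness of $\bar{\gamma}$, $\bar{f}$, $\bar{v}_o$, $\bar{v}_1$ on the support of $\Phi$, the $\mathcal{C}^2$ regularity of all coefficients, and the compatibility condition via Agmon's inequality exactly as in your Step 2. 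Your Steps 1 and 2 therefore reproduce most of the paper's argument; the difference is your Step 3, where you obtain only local existence and then re-derive globality by a maximum-principle plus Schauder continuation argument. That step is where the real analytic difficulty lives: an $L^\infty$ barrier compatible with the nonlinear Neumann data $u_x(1)=\bar{v}_1(u(1))$ and the subsequent $\mathcal{H}^{2+\beta,1+\beta/2}$ bootstrap for a quasilinear equation with nonlinear boundary conditions are precisely the contents of the cited theorem, and your sketch leaves them unproven. Since Lemma \ref{ladyz} is available and its hypotheses are exactly the conditions you have already established (modulo writing out \eqref{to_show_2}--\eqref{to_show_4} explicitly, e.g. $u(\bar{\gamma}(u)u_x+\bar{f}(u))\leq \tfrac{1}{2}u^2(\bar{\gamma}(u)^2+\bar{f}(u)^2)+\tfrac{1}{2}u_x^2$ with $\bar{f}(u)^2\leq(\mathcal{M}+1)^{2p}$ and $\bar{\gamma}(u)^2\leq(\sum_{i=1}^{n}|\gamma_i|)^2(\mathcal{M}+1)^2$), the cleaner and fully rigorous path is to invoke it directly rather than to rebuild the continuation argument; what your approach buys in exchange is independence from the precise form of the quoted theorem, at the cost of substantial unwritten parabolic regularity theory.
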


A key step consists in showing that every complete solution $u$ to $\bar{\Sigma}$, under $\kappa_o(u_o) \leq \sqrt{2\omega}$, must verify  
\begin{align} \label{bar_M}
|u(x,t)| \leq \mathcal{M} \qquad \forall x\in [0,1], \qquad  
\forall t \geq 0. 
\end{align}
Hence, it would follow that
$$ \Phi(u(x,t))=1 \qquad \forall x \in [0,1], \qquad  \forall t\geq 0.$$
Leading to
\begin{equation} 
\label{all_terms}
\begin{aligned}
&\tilde{\varepsilon}(u)=\varepsilon(u), \ \ \bar{\gamma}(u) = \sum_{i=1}^{n}\gamma_i u^i, \ \ \bar{f}(u) = u^p, \\
&\bar{v}_1(u(1))=v_1(u(1)), \ \ \bar{v}_o(u(0)) = v_o(u(0)).
\end{aligned}
\end{equation}
The latter implies that $u$ is also a complete solution to $\Sigma$.
For uniqueness of complete solutions to $\Sigma$, we note that if $v$ is a different complete solution to $\Sigma$, then \eqref{bar_M} would hold, while replacing $u$ therein by $v$, since \eqref{decay_max} is verified on $[0,+\infty)$. Therefore, the identities in \eqref{all_terms}, while replacing $u$ therein by $v$,  are satisfied, and $v$ must also be a complete solution to $\bar{\Sigma}$. However, $\bar{\Sigma}$ admits a unique complete solution. Hence, $u = v$. 

Now, to verify \eqref{bar_M}, we distinguish between two situations. 

When $u_o = 0$, then $u \equiv 0$ is the unique complete complete solution to $\bar{\Sigma}$, and \eqref{bar_M} holds. 

When $u_o$ is not identically null, since $\delta_{\mathcal{M}}>0$, we use Agmon's inequality to obtain
\begin{align}
|u_o(x)|< \mathcal{M} \qquad \forall x\in [0,1]. \label{uo_m}
\end{align}
To find a contradiction,  we let $t'>0$ such that
\begin{align*}
|u(x,t')| & > \mathcal{M} \quad \text{for some} \quad  x \in [0,1], 
\\ 
|u(x,t'')| & \leq \mathcal{M} \quad \forall x \in [0,1] \quad \forall t'' \in [0,t'). 
\end{align*}
The existence of such a $t'$ is guaranteed by \eqref{uo_m} and the continuity of the map $t\mapsto |u(\cdot,t)|_{\infty}$. 

As a result, $u$ is a solution to $\Sigma$ on $[0,t')$ and \eqref{decay_max} holds for all $t \in [0,t')$ since $\kappa_o(u_o) \leq \sqrt{2\omega}$. This further leads to $|u(\cdot,t)|_{\infty} < \mathcal{M}$ for all $t\in [0,t')$. Finally, by continuity of the map $t\mapsto |u(\cdot,t)|_{\infty}$, we conclude that $|u(x,t')|\leq \mathcal{M} \quad \forall x\in [0,1]$, which yields a contradiction. 

\subsection{Proof of Property \ref{prop3} (Positivity)}

The proof uses the parabolic comparison principle in Lemma \ref{parabolic_principle}. Indeed, for a complete solution $u$ to $\Sigma$, we let, for all $x\in [0,1]$ and all $t\geq 0$, $
\varepsilon_p(x,t) := \varepsilon(u(x,t)), \ \ \gamma_p(x,t) := \sum_{i=1}^{n}\gamma_i u(x,t)^i$.

We note that $\gamma_p\in \mathcal{H}_{loc}([0,1]\times [0,+\infty))$ since $u \in \mathcal{H}^{2+\beta,1+\beta/2}_{loc}([0,1]\times [0,+\infty))$. Additionally,  $\varepsilon \in \mathcal{H}_{loc}(\mathbb{R})$ because $\varepsilon'\in \mathcal{H}_{loc}(\mathbb{R})$, and thus, 
$\varepsilon_p \in \mathcal{H}_{loc}([0,1]\times [0,+\infty))$.
Next, we note that 
$\varepsilon_{px} = u_x\varepsilon'(u)$. Thus,  $\varepsilon_{px}\in \mathcal{H}_{loc}([0,1]\times [ 0,+\infty))$ since $u \in \mathcal{H}^{2+\beta,1+\beta/2}_{loc}([0,1]\times [0,+\infty))$ and $\varepsilon'\in \mathcal{H}_{loc}(\mathbb{R})$. 
Finally, since the maps $u\mapsto u^p$, $u\mapsto -v_1(u)$ and $u\mapsto v_o(u)$ are nondecreasing on $[0,+\infty)$, applying Lemma \ref{parabolic_principle} under  $(f(u),g_1,g_o) := (u^p,-v_1,v_o)$, Property \ref{prop3} follows.

\subsection{Proof of Property \ref{prop4} (Invertibility)}

For all $l \in \{0,1\}$, when $\mu=0$, it follows that $u_x(l) = v_l(u(l)) = (-1)^l\lambda_l u(l)$ 
and  $u(l) = d_{l}(u_x(l)) :=  (-1)^{l} u_x(l)/\lambda_l$. Now, suppose that $\mu \neq 0$. In this case, according to \eqref{v1}-\eqref{v2}, we have 
\begin{align}
(-1)^{l+1}u(l)^3 + (-1)^{l+1}\frac{\lambda_l}{\mu}u(l) + \frac{1}{\lambda_l}u_x(l) = 0. \label{polynomial}
\end{align}
The latter is a cubic polynomial equation in $u(l)$, whose discriminant $\Delta := \frac{u_x(l)^2}{4\lambda_l^2} + \frac{\lambda_l^3}{27\mu^3}$ verifies $\Delta \geq 0$. Since the discriminant is nonnegative, then \eqref{polynomial} admits a unique real solution given by \eqref{cubic1}.

\section{Conclusion}
The focus in this paper was on stabilization of quasilinear parabolic equations that exhibit finite-time blow-up phenomena in open loop, with an estimate of the region of attraction. Specifically, we designed boundary controllers as cubic polynomials of boundary measurements, to guarantee exponential stability of the origin in the $L^2$ and $H^1$ norms with an estimate of the region of attraction, exponential convergence of the $H^2$ and $\mathcal{C}^1$ norms of the solutions to zero, well-posedness, and positivity of solutions. Moving forward, we aim to extend our framework to the case where $\varepsilon$ depends on $u_x$, which would lead to additional (possibly destabilizing) terms in the upperbounds on $\dot{V}$ and $\dot{H}$. It is unclear yet how we can adapt our approach to handle these terms. Another important problem is the extension to higher-dimensional domains. The main difficulty, in this case, is that Agmon's inequality, which is key in our stability analysis, does not hold in higher dimensions.

\appendix 

\section{Useful results} \label{appendixB}

In this section, we recall some results that are used in the paper. The following two inequalities are in \cite{burgers}.

\begin{lemma}[Agmon's inequality]
Given $u\in \mathcal{C}^1[0,1]$, we have $
|u|_{\infty}^2 \leq |u|_{L^2}^2+2|u|_{L^2}|u_x|_{L^2}$.
\end{lemma}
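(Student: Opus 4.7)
The plan is to prove the pointwise bound $u(x)^2 \leq |u|_{L^2}^2 + 2|u|_{L^2}|u_x|_{L^2}$ for every $x \in [0,1]$, which immediately yields the claim upon taking the maximum in $x$. The strategy rests on two classical facts: the mean value theorem for integrals to locate a point where $u^2$ equals its $L^2$ norm squared, and Cauchy-Schwarz to control the resulting boundary-to-interior increment.

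First, I would invoke the mean value theorem for integrals applied to the continuous function $s \mapsto u(s)^2$ on $[0,1]$ to obtain a point $y^* \in [0,1]$ such that
\begin{equation*}
u(y^*)^2 = \int_0^1 u(s)^2 ds = |u|_{L^2}^2.
\end{equation*}
Next, for an arbitrary $x \in [0,1]$, I would use the fundamental theorem of calculus on $u^2 \in \mathcal{C}^1[0,1]$ to write
\begin{equation*}
u(x)^2 = u(y^*)^2 + \int_{y^*}^{x} 2 u(s) u_x(s) ds.
\end{equation*}
Substituting the identity for $u(y^*)^2$ and bounding the integral in absolute value gives
\begin{equation*}
u(x)^2 \leq |u|_{L^2}^2 + 2 \int_0^1 |u(s)||u_x(s)| ds.
\end{equation*}

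Finally, Cauchy-Schwarz yields $\int_0^1 |u(s)||u_x(s)| ds \leq |u|_{L^2}|u_x|_{L^2}$, so
\begin{equation*}
u(x)^2 \leq |u|_{L^2}^2 + 2 |u|_{L^2}|u_x|_{L^2} \qquad \forall x \in [0,1].
\end{equation*}
Taking the maximum over $x \in [0,1]$ on the left-hand side (the right-hand side is independent of $x$) produces the stated inequality. There is no real obstacle in this proof: the only subtle point is the use of the mean value theorem for integrals, which requires continuity of $u^2$, granted since $u \in \mathcal{C}^1[0,1] \subset \mathcal{C}^0[0,1]$.
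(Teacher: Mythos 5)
Your proof is correct: the mean value theorem for integrals gives a point $y^*$ with $u(y^*)^2 = |u|_{L^2}^2$, the fundamental theorem of calculus applied to $u^2$ controls the increment, and Cauchy--Schwarz closes the estimate, with the absolute-value bound correctly handling the case $x < y^*$. Note that the paper itself offers no proof of this lemma---it is simply quoted from the cited reference \cite{burgers}---and your argument is precisely the standard one found there, so there is nothing to reconcile.
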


\begin{lemma}[Poincaré's inequality]\label{poincare}
Given $u\in \mathcal{C}^1[0,1]$, we have $
-2u(l)^2 \leq - |u|_{L^2}^2 + 4|u_x|_{L^2}^2$ for all $l\in \{0,1\}$.
\end{lemma}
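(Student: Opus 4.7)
The plan is to rewrite the inequality in the equivalent form $|u|_{L^2}^2 \leq 2 u(l)^2 + 4|u_x|_{L^2}^2$ and derive it via a single integration by parts followed by Cauchy--Schwarz and Young's inequality. This is essentially a weighted energy identity in which the weight is chosen so that the boundary term at $l$ is picked up and the boundary term at $1-l$ vanishes.

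For the case $l=1$, I would introduce the weighted integrand $x\, \frac{d}{dx}(u^2) = 2x\, u\, u_x$ on $[0,1]$ and integrate by parts:
\begin{equation*}
\int_0^1 2x\, u\, u_x\, dx = \bigl[x\, u^2\bigr]_0^1 - \int_0^1 u^2\, dx = u(1)^2 - |u|_{L^2}^2.
\end{equation*}
Rearranging gives $|u|_{L^2}^2 = u(1)^2 - 2\int_0^1 x\, u\, u_x\, dx$. Since $x \leq 1$ on $[0,1]$, Cauchy--Schwarz yields $\bigl|\int_0^1 x\, u\, u_x\, dx\bigr| \leq |u|_{L^2} |u_x|_{L^2}$, and then Young's inequality $2ab \leq \tfrac{1}{2}a^2 + 2b^2$ with $a=|u|_{L^2}$, $b=|u_x|_{L^2}$ gives
\begin{equation*}
|u|_{L^2}^2 \leq u(1)^2 + \tfrac{1}{2}|u|_{L^2}^2 + 2|u_x|_{L^2}^2,
\end{equation*}
from which $|u|_{L^2}^2 \leq 2 u(1)^2 + 4|u_x|_{L^2}^2$ follows after absorbing the $|u|_{L^2}^2/2$ term on the left.

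For the case $l=0$, the symmetric argument uses the weight $(x-1)$ in place of $x$: integration by parts of $\int_0^1 (x-1)\, \frac{d}{dx}(u^2)\, dx$ produces the boundary contribution $u(0)^2$ instead of $u(1)^2$, and since $|x-1| \leq 1$ on $[0,1]$, the same Cauchy--Schwarz/Young steps go through verbatim. This is a routine one-dimensional Poincaré-type computation, so I do not expect any genuine obstacle; the only subtle choice is picking the right weight so that Young's inequality closes with the constants $2$ and $4$ stated in the lemma rather than weaker constants.
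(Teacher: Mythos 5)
Your proof is correct, and it matches the standard argument: the paper itself gives no proof of this lemma (it simply cites Krstic's 1999 Burgers' paper), where the inequality is established by exactly your weighted integration by parts $\bigl[x\,u^2\bigr]_0^1$ (resp. $\bigl[(x-1)\,u^2\bigr]_0^1$) followed by Cauchy--Schwarz and Young with the splitting $2ab \leq \tfrac{1}{2}a^2 + 2b^2$. Nothing to correct.
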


The following result is in \cite[Theorem 7.4]{ladyzen}. 

\begin{lemma}\label{ladyz}
The system
\begin{equation*}
\bar{\Sigma} \ : \ \left\lbrace
\begin{aligned}
&u_t = \tilde{\varepsilon} (u)u_{xx} + \bar{\gamma}(u) u_x + \bar{f}(u), \\
&u_x(0) = \bar{v}_o(u(0)), \ \ u_x(1) = \bar{v}_1(u(1)), \\
&u(x,0) = u_o(x) \in \mathcal{H}^{2+\beta}[0,1], \ \ \beta \in (0,1),
\end{aligned}
\right.
\end{equation*}
admits a unique complete solution, and any other solution must be its restriction, if the following conditions hold.
\begin{itemize}
    \item[1-] $\exists$ $\underline{c},\bar{c},C>0$ such that, for all $u,u_x\in \mathbb{R}$,
\end{itemize}
    \begin{align}
    \underline{c} &\leq \tilde{\varepsilon}(u)\leq \bar{c}, \label{to_show_1} \\
    u\left( \bar{\gamma}(u)u_x +\bar{f}(u)\right) &\leq C\left( u_x^2+u^2+1\right), \label{to_show_2} \\
    u \bar{v}_1(u) &\leq C(u^2+1), \label{to_show_3}\\
    -u\bar{v}_o(u) &\leq C(u^2+1). \label{to_show_4}
    \end{align}
    \begin{itemize}
    \item[2-]  $\bar{v}_1,\bar{v}_o,\tilde{\varepsilon}, \bar{\gamma}, \bar{f}\in \mathcal{C}^2$.
    \item[3-]  $u_{o}'(1)=\bar{v}_1(u_o(1))$ and $u_{o}'(0)=\bar{v}_o(u_o(0))$.
\end{itemize}
\end{lemma}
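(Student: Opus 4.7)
The plan is to follow the classical Ladyzhenskaya–Solonnikov–Ural'tseva strategy for quasilinear parabolic equations with Neumann-type boundary feedback, in three stages: local existence by a fixed-point argument, a priori estimates, and continuation.

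For \textbf{local existence}, I would linearize via the Schauder fixed-point theorem. Given $w$ in a closed ball of $\mathcal{H}^{\beta,\beta/2}([0,1]\times[0,\tau])$ with $w(x,0)=u_o(x)$, consider the linear parabolic problem obtained by freezing the nonlinearities: $u_t = \tilde\varepsilon(w) u_{xx} + \bar\gamma(w) u_x + \bar f(w)$ with the Neumann data $u_x(0,t)=\bar v_o(w(0,t))$, $u_x(1,t)=\bar v_1(w(1,t))$. By condition~1, $\tilde\varepsilon(w)\in[\underline c,\bar c]$, so the problem is uniformly parabolic; by condition~2, all coefficients and boundary data are Hölder; by condition~3, the zeroth-order compatibility holds. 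The Schauder theory for linear parabolic problems (see \cite[Ch.~IV]{ladyzen}) then yields a unique solution $u\in\mathcal{H}^{2+\beta,1+\beta/2}$. The map $T\colon w\mapsto u$ is compact by the standard compact embedding of Hölder spaces, and for $\tau$ sufficiently small the interior and Schauder estimates imply that $T$ maps a ball into itself. Schauder's theorem provides a fixed point, hence a local classical solution on $[0,\tau]$.

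For \textbf{local uniqueness}, take two classical solutions $u_1,u_2$ agreeing at $t=0$, set $z:=u_1-u_2$, and derive an evolution equation for $z$ whose coefficients involve differences of $\tilde\varepsilon,\bar\gamma,\bar f,\bar v_o,\bar v_1$. Since all these maps are $\mathcal{C}^2$ (condition~2), the Lipschitz estimates on any bounded set allow me to multiply by $z$, integrate by parts using $\tilde\varepsilon\ge \underline c>0$, absorb the boundary contributions using the local Lipschitz continuity of $\bar v_o,\bar v_1$, and close the estimate via Gronwall's inequality.

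For \textbf{global existence}, the key is an a priori $L^\infty$ bound. I would first multiply the equation by $u$ and integrate. The quadratic-growth condition \eqref{to_show_2} on $u(\bar\gamma(u)u_x+\bar f(u))$ combined with $\tilde\varepsilon\ge\underline c$ lets me absorb the gradient term into the diffusion and get $\tfrac{d}{dt}|u|_{L^2}^2+\underline c|u_x|_{L^2}^2\leq C(|u|_{L^2}^2+1)+(\text{boundary})$; the boundary contributions $\pm u(l)\tilde\varepsilon(u(l))\bar v_l(u(l))$ are controlled by the one-sided bounds \eqref{to_show_3}–\eqref{to_show_4} together with the trace inequality. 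Gronwall then gives $L^\infty(0,T;L^2)\cap L^2(0,T;H^1)$ bounds for any finite $T$. From here one promotes the bound to $L^\infty([0,1]\times[0,T])$ by Moser iteration on truncations $(u-k)^+$, which again is made possible by the quadratic structure of \eqref{to_show_2}–\eqref{to_show_4}. With $|u|_\infty$ controlled, the coefficients $\tilde\varepsilon(u),\bar\gamma(u),\bar f(u)$ are bounded, so De Giorgi–Nash–Moser yields interior and boundary Hölder regularity of $u$; feeding this back into the Schauder theory for the linear problem with these coefficients gives $\mathcal{H}^{2+\beta,1+\beta/2}$ estimates on every $[0,1]\times[0,T]$. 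A standard continuation argument then extends the local solution to $[0,+\infty)$, and uniqueness of the global solution (together with any other solution being a restriction) follows from the local uniqueness applied on each compact subinterval.

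The \textbf{main obstacle} will be the a priori $L^\infty$ estimate with the \emph{nonlinear Neumann} data. The growth conditions \eqref{to_show_2}–\eqref{to_show_4} are tailored precisely so that, in the Moser iteration, the boundary terms arising from integration by parts on $(u-k)^+$ produce only lower-order contributions of the form $C((u-k)^{+2}+1)$, which are absorbed into the $L^2$ part of the iteration inequality; verifying this absorption carefully, uniformly in the truncation level $k$, is the technical core of the argument and is where conditions 1–3 are used in a nontrivial way.
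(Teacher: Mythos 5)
You should first note that the paper does not prove this lemma at all: it is imported verbatim from \cite[Theorem 7.4]{ladyzen}, so the relevant comparison is with the Ladyzhenskaya--Solonnikov--Ural'tseva proof, whose overall architecture (a priori bounds $\to$ Hölder estimates $\to$ linear Schauder theory $\to$ fixed point/continuation) your sketch does mirror in outline. Your local-existence, local-uniqueness, and continuation steps are standard and essentially sound at sketch level.

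The genuine gap is in the pivotal step, the global a priori $L^\infty$ bound. Your energy estimate does not close as written, for two concrete reasons. First, multiplying by $u$ and integrating the quasilinear diffusion by parts produces $\int_0^1 u\,\tilde{\varepsilon}(u)u_{xx}\,dx = \left[u\,\tilde{\varepsilon}(u)u_x\right]_0^1 - \int_0^1 \tilde{\varepsilon}(u)u_x^2\,dx - \int_0^1 u\,\tilde{\varepsilon}'(u)u_x^2\,dx$, and the last term has no sign and no a priori bound ($\tilde{\varepsilon}'$ is only locally bounded and you have no $|u|_\infty$ bound yet --- obtaining one is the very point of the step). Second, the constant $C$ in \eqref{to_show_2} is arbitrary, so the $C|u_x|_{L^2}^2$ you produce by integrating the pointwise bound cannot be absorbed into $-\underline{c}|u_x|_{L^2}^2$: nothing gives $C<\underline{c}$. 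The LSU proof avoids energy estimates here entirely: it bounds $\max|u|$ by maximum-principle/barrier arguments, using \eqref{to_show_2} in its pointwise form. Indeed, since \eqref{to_show_2} holds for \emph{all} $u_x\in\mathbb{R}$, optimizing over $u_x$ shows $|u\bar{\gamma}(u)|\le 2C\sqrt{u^2+1}$ (so the convection coefficient is essentially bounded), and taking $u_x=0$ gives $u\bar{f}(u)\le C(u^2+1)$, which yields an exponential-in-time bound on $\max|u|$ at interior extrema, with \eqref{to_show_3}--\eqref{to_show_4} ruling out boundary blow-up of the maximum. Moreover, even granting $|u|_\infty$, your jump from De Giorgi--Nash--Moser Hölder continuity back to $\mathcal{H}^{2+\beta,1+\beta/2}$ glosses over the gradient estimate: for quasilinear equations with nonlinear Neumann data one needs boundary and interior $\max|u_x|$ bounds (barrier and Bernstein-type arguments, \cite{ladyzen} Ch.~V, \S\S3--6) before Schauder theory applies, since the Neumann datum $\bar{v}_1(u(1,t))$ inherits only the regularity of $u(1,\cdot)$. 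Your energy route could be repaired (e.g., multiply by $u/\tilde{\varepsilon}(u)$ so that the diffusion integrates to $-|u_x|_{L^2}^2$ exactly, integrate the convection to pure boundary terms, and use the derived boundedness of $\bar{\gamma}$), but as stated the absorption claim, and hence the Moser iteration built on it, fails.
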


The next lemma can be found in \cite[Condition (H$_1$)]{maximum_principle}.

\begin{lemma}[Parabolic comparison principle]\label{parabolic_principle}
Let the system 
\begin{equation*}
\Sigma_p \ : \ \left\lbrace
\begin{aligned}
&u_t = \varepsilon_p(x,t)u_{xx}+\gamma_p(x,t)u_x + f(u), \\
&u_x(0) = g_o(u(0)), \ \ u_x(1) = -g_1(u(1)), \\
&u(x,0) = u_o(x) \in \mathcal{H}^{2+\beta}[0,1], \ \ \beta \in (0,1),
\end{aligned}
\right.
\end{equation*}
where $\varepsilon_p, \varepsilon_{px}, \gamma_p\in \mathcal{H}_{loc}([0,1]\times [0,+\infty))$ and $f,g_1,g_o\in \mathcal{H}_{loc}(\mathbb{R})$. Moreover, suppose that, for any  $u_2 \geq u_1 \geq 0$, we have $f(u_2)-f(u_1) \geq 0$, 
$g_1(u_2)-g_1(u_1)\geq 0$, and 
$g_o(u_2)-g_o(u_1) \geq 0$. Finally, let $u_o(x)\geq 0$ for all $x\in [0,1]$. Then, any solution $u$ verifies $u(x,t) 
\geq 0$ for all $x\in [0,1]$ and for all $t$ in the interval of existence of $u$.
\end{lemma}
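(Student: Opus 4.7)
The plan is a classical perturbed--maximum-principle argument: I would show that $u$ cannot first touch $0$ from above, neither in the interior nor at the boundary. For each $\epsilon>0$, set $w(x,t) := u(x,t) + \epsilon(1+t)$, so that $w(\cdot,0) \geq \epsilon > 0$ and $w$ satisfies the PDE of $\Sigma_p$ with an extra drift $+\epsilon$ on the right-hand side, while the Neumann data read $w_x(0) = g_o(w(0) - \epsilon(1+t))$ and $w_x(1) = -g_1(w(1) - \epsilon(1+t))$. Arguing by contradiction, if $u$ were negative somewhere, continuity of $w$ would yield a first touch-down $(x^{*},t^{*})$ with $w(x^{*},t^{*}) = 0$, $w > 0$ on $[0,1]\times[0,t^{*})$, and $u(x^{*},t^{*}) = -\epsilon(1+t^{*}) < 0$.

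In the interior case $x^{*} \in (0,1)$, the minimum-point conditions $w_t(x^{*},t^{*}) \leq 0$, $w_x(x^{*},t^{*}) = 0$, $w_{xx}(x^{*},t^{*}) \geq 0$, together with $\varepsilon_p > 0$ and the PDE, reduce to the scalar inequality $f(-\epsilon(1+t^{*})) + \epsilon \leq 0$. The local Hölder continuity of $f$ and the natural sub-solution hypothesis $f(0) \geq 0$---which is automatic in the intended application where $f(u)=u^p$---then make this impossible for all sufficiently small $\epsilon$, since $f(-\epsilon(1+t^{*})) \to f(0) \geq 0$ as $\epsilon \to 0^{+}$.

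In the boundary case $x^{*}=0$ (the case $x^{*}=1$ is symmetric), Hopf's boundary-point lemma---whose hypotheses are met thanks to the uniform lower bound $\varepsilon_p \geq \underline{c} > 0$ and the local Hölder regularity of $\varepsilon_p$, $\varepsilon_{px}$, $\gamma_p$---forces $w_x(0,t^{*}) > 0$. But the Neumann boundary condition gives $w_x(0,t^{*}) = g_o(-\epsilon(1+t^{*}))$, and the monotonicity of $g_o$ on $[0,+\infty)$ together with the natural consistency $g_o(0) \leq 0$ (again automatic in the application, where $g_o(0)=0$) yields $w_x(0,t^{*}) \leq 0$, a contradiction. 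Ruling out all three cases gives $w > 0$ for every $\epsilon > 0$; letting $\epsilon \to 0^{+}$ then yields $u \geq 0$ on the entire interval of existence. The hard step will be the boundary analysis: Hopf's lemma must be applied via a local exponential barrier that simultaneously exploits strict ellipticity (to dominate the drift $\gamma_p u_x$) and the Hölder regularity of the coefficients (to control coefficient-freezing errors), rather than by a direct sign computation as in the interior case.
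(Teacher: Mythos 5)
Before comparing: the paper contains no proof of this lemma to match your argument against. It is imported wholesale from Pao's 1976 paper, cited as condition (H$_1$) of \cite{maximum_principle}, so your attempt has to stand on its own. Your template (perturb, locate a first touch-down, split into interior and Hopf boundary cases) is indeed the classical route, but two of its steps fail against the hypotheses as the lemma states them.

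First, the interior case. You need $f(-\epsilon(1+t^{*}))+\epsilon>0$, but local H\"older continuity with exponent $\beta<1$ only yields $f(-\delta)\geq f(0)-C\delta^{\beta}$ with $\delta=\epsilon(1+t^{*})\geq\epsilon$; when $f(0)=0$ (exactly the paper's application, $f(u)=u^{p}$), the error term $C\delta^{\beta}$ dominates $\epsilon$ as $\epsilon\to 0^{+}$, so no contradiction follows ``for all sufficiently small $\epsilon$.'' This is not a technicality: take $f(s)=-|s|^{1/2}$ for $s\leq 0$ and $f(s)=0$ for $s\geq 0$ (H\"older, nondecreasing, $f(0)=0$), $g_{o}=g_{1}=0$, $u_{o}\equiv 0$; then $u(x,t)=-t^{2}/4$ is a classical solution that goes negative, so positivity genuinely requires more than H\"older regularity plus $f(0)\geq 0$. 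What rescues the argument is a one-sided Lipschitz-type bound $f(-\delta)\geq -L\delta$ near $0$ (satisfied by $u^{p}$ with $p>1$, since $|f(-\delta)|=\delta^{p}=o(\delta)$), combined with the standard perturbation $w=u+\epsilon e^{\lambda t}$, $\lambda>L$, rather than $\epsilon(1+t)$; and since $t^{*}$ depends on $\epsilon$, the argument should be run on compact subintervals $[0,T]$ of the interval of existence.

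Second, the boundary case. Hopf's lemma gives $w_{x}(0,t^{*})>0$, and the boundary condition gives $w_{x}(0,t^{*})=g_{o}(-\epsilon(1+t^{*}))$ --- an evaluation of $g_{o}$ at a \emph{negative} argument. The lemma's monotonicity hypothesis is stated only for $u_{2}\geq u_{1}\geq 0$, so it says nothing about $g_{o}$ on $(-\infty,0)$, and even granting your extra assumption $g_{o}(0)\leq 0$, a H\"older $g_{o}$ with $g_{o}(s)=\sqrt{-s}$ for $s<0$ is compatible with every stated hypothesis yet makes your claimed inequality $w_{x}(0,t^{*})\leq 0$ false. You need a sign bound across zero, $g_{o}(s)\leq g_{o}(0)\leq 0$ for $s\leq 0$ (and symmetrically $g_{1}(s)\geq 0$ would be the wrong sign; one needs $-g_{1}(s)\geq 0$, i.e.\ $g_{1}(s)\leq 0$, for $s\leq 0$ at $x=1$), which does hold in the paper's application because $g_{o}(s)=\lambda_{o}s+\mu s^{3}$ and $g_{1}(s)=\lambda_{1}s+\mu s^{3}$ are increasing on all of $\mathbb{R}$ and vanish at $0$. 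A similar remark applies to your use of uniform ellipticity $\varepsilon_{p}\geq\underline{c}>0$: it is part of Pao's standing assumptions but is not written in the lemma. In short, your proof strategy is the right one, but as stated it silently strengthens the hypotheses in three places (behavior of $f$, $g_{o}$, $g_{1}$ at negative arguments beyond H\"older continuity, and strict parabolicity); those strengthenings are exactly what Pao's condition (H$_1$) supplies and what the paper's concrete $f$, $g_{o}$, $g_{1}$, $\varepsilon_{p}$ satisfy.
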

\section{Proof of lemmas}\label{appendixA}
\subsection{Proof of Lemma \ref{lem1}}\label{proof_1}
Differentiating $V$ with respect to time, we obtain 
$$\dot{V}
= \int_{0}^{1}u\big( \varepsilon (u)u_{xx}+\sum_{i=1}^{n}\gamma_i u^iu_x+u^p\big)dx. $$ Furthermore, using integration by parts, we get 
\begin{align*}
\int_{0}^{1}&u(x)\varepsilon (u(x))u_{xx}(x)dx \leq u(1)\varepsilon(u(1))v_1(u(1)) \\
&~- u(0)\varepsilon (u(0))v_o(u(0))- \big(\underline{\varepsilon}-|u|_{\infty}\bar{\varepsilon}(|u|_{\infty})\big)|u_x|_{L^2}^2.
\end{align*}
Using integration by parts one more time, we obtain 
\begin{align*}
\sum_{i=1}^{n}\gamma_i&\int_{0}^{1}u(x)^{i+1}u_x(x)dx= \sum_{i=1}^{n}\frac{\gamma_i}{i+2}\big(u(1)^{i+2}-u(0)^{i+2}\big).
\end{align*}
Finally, we note that $\int_{0}^{1}u(x)^{p+1}dx \leq 2|u|_{\infty}^{p-1}V$ to obtain 
\begin{align}
\dot{V} &\leq 2|u|_{\infty}^{p-1}V- \big(\underline{\varepsilon}-|u|_{\infty}\bar{\varepsilon}(|u|_{\infty})\big)|u_x|_{L^2}^2 \nonumber \\
&~+u(1)\varepsilon(u(1))v_1(u(1))-u(0)\varepsilon (u(0))v_o(u(0)) \nonumber \\
&~+\sum_{i=1}^{n}\frac{\gamma_i}{i+2}\big(u(1)^{i+2}-u(0)^{i+2}\big). \label{ineq_V_1}
\end{align}
The latter can be rewritten as
\begin{align}
\dot{V} &\leq 2|u|_{\infty}^{p-1}V- \big(\underline{\varepsilon}-|u|_{\infty}\bar{\varepsilon}(|u|_{\infty})\big)|u_x|_{L^2}^2+\sum_{i=1}^{n}\frac{\gamma_iu(1)^{i+2}}{i+2} \nonumber \\
&+u(1)\varepsilon(u(1))v_1(u(1))-u(0)\varepsilon (u(0))v_o(u(0)) \nonumber
\end{align}
\begin{align}
-\sum_{i=1}^{n}\frac{\gamma_i}{i+2}u(0)^{i+2} + \sum_{i=0}^{1}(-1)^i\sum_{k=0}^{1}(m_l+k_l)u(l)^2. \nonumber 
\end{align}
Next, by Poincaré's inequality in the Appendix, we have 
\begin{align*}
-m_lu(l)^2 \leq -m_lV+2m_l|u_x|_{L^2}^2 \qquad  \forall l\in \{0,1\}.
\end{align*}
On the other hand, if $i\geq 2$,  we have 
\begin{align*}
(-1)^{l+1}\gamma_i u(l)^{i+2}
&\leq u(l)\varepsilon (u(l)) \frac{|\gamma_i | |u|_{\infty}^{i-2}}{\underline{\varepsilon}}u(l)^3 \ \ \forall l\in \{0,1\}.
\end{align*}
Moreover, using Young's inequality, we get 
\begin{align*}
    \gamma_1 u(l)^3
    \leq u(l)\varepsilon (u(l))\bigg(\frac{|\gamma_1|}{2\underline{\varepsilon}}u(l)+\frac{|\gamma_1|}{2\underline{\varepsilon}}u(l)^3\bigg) \ \ \forall l\in \{0,1\}.
\end{align*}
Finally, note that
\begin{align*}
(m_l+k_l)u(l)^2 \leq u(l)\varepsilon (u(l)) \frac{m_l+k_l}{\underline{\varepsilon}}u(l) \ \ \forall l\in \{0,1\}.
\end{align*}
Combining the latter inequalities, we obtain the inequality in Lemma \ref{lem1}.
\subsection{Proof of Lemma \ref{lem2}}\label{proof_2}
Differentiating $H$ with respect to time, we obtain 
\begin{align}
\dot{H} =&~ \frac{1}{2}\frac{d}{dt}|u_x|_{L^2}^2 - v_1(u(1))u_t(1)+ v_o(u(0))u_t(0). \label{H_dot}
\end{align}

We now introduce the following claim. 
\begin{claim} \label{clm1}
It holds that $\dot{H} = - \int_{0}^{1} u_{xx}(x)u_t(x)dx$. 
\end{claim}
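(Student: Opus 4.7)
The identity to be shown reduces to justifying an interchange of $\frac{d}{dt}$ with the spatial integral in $\frac{1}{2}\frac{d}{dt}|u_x|_{L^2}^2$, and then applying integration by parts in $x$. The plan is to first handle the boundary contributions in $H$ algebraically, then establish the key identity for $|u_x|_{L^2}^2$ via a mollification argument, and finally combine the two pieces so that the boundary terms cancel.

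First, I would differentiate the four boundary terms of $H$ in \eqref{H_def} directly. Since $u\mapsto u(l)$ is continuously differentiable in $t$ (as $u_t$ is continuous up to the boundary), one obtains
$$ \frac{d}{dt}\!\left[\frac{\lambda_l}{2}u(l)^2 + \frac{\mu}{4}u(l)^4\right] = \bigl(\lambda_l u(l) + \mu u(l)^3\bigr)u_t(l), \quad l\in\{0,1\}.$$
By \eqref{v2}--\eqref{v1}, $\lambda_o u(0)+\mu u(0)^3 = v_o(u(0))$ and $\lambda_1 u(1)+\mu u(1)^3 = -v_1(u(1))$, so that summing these contributions with $\frac{1}{2}\frac{d}{dt}|u_x|_{L^2}^2$ yields exactly \eqref{H_dot}.

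The crux is then to show
$$ \frac{1}{2}\frac{d}{dt}|u_x|_{L^2}^2 \;=\; u_x(1)u_t(1)-u_x(0)u_t(0) - \int_0^1 u_{xx}(x)u_t(x)\,dx, $$
since, upon inserting $u_x(l)=v_l(u(l))$, this will combine with \eqref{H_dot} so that the $v_l(u(l))u_t(l)$ terms cancel and the stated formula $\dot H = -\int_0^1 u_{xx}u_t\,dx$ follows. The difficulty is that the regularity of a classical solution from Definition~\ref{def1} only provides Hölder continuity of $u_{xx}$ and $u_t$; a priori $u_{xt}$ need not be continuous, so the naive exchange $\frac{1}{2}\frac{d}{dt}\int u_x^2\,dx = \int u_x u_{xt}\,dx$ is not justified directly.

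To bypass this, I would use a time mollification. Fix $[a,b]\subset (0,T)$ and let $\rho_\varepsilon$ be a standard even mollifier on $\mathbb{R}$; set $u^\varepsilon(x,t):=(\rho_\varepsilon\ast_t u)(x,t)$ for $t$ in a smaller sub-interval. Then $u^\varepsilon$ is $C^\infty$ in $t$, $u^\varepsilon_{xt}$ is continuous, and for such a smooth function the interchange and subsequent integration by parts in $x$ are standard, yielding
$$ \frac{1}{2}\frac{d}{dt}|u^\varepsilon_x(\cdot,t)|_{L^2}^2 = u^\varepsilon_x(1,t)u^\varepsilon_t(1,t) - u^\varepsilon_x(0,t)u^\varepsilon_t(0,t) - \int_0^1 u^\varepsilon_{xx}(x,t)u^\varepsilon_t(x,t)\,dx. $$
Because $u\in\mathcal{H}^{2+\beta,1+\beta/2}_{loc}$, the mollified derivatives $u^\varepsilon_x,u^\varepsilon_{xx},u^\varepsilon_t$ converge uniformly on $[0,1]\times[a',b']$ to $u_x,u_{xx},u_t$ as $\varepsilon\to 0$, and $|u^\varepsilon_x(\cdot,t)|_{L^2}^2\to |u_x(\cdot,t)|_{L^2}^2$ in $C^1_t$ on the shrunken interval. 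Passing to the limit gives the identity above pointwise in $t$, and substituting $u_x(l)=v_l(u(l))$ then combining with \eqref{H_dot} establishes Claim~\ref{clm1}. The main obstacle, as expected, is the justification of the limit argument for the boundary evaluations $u^\varepsilon_x(l,t)u^\varepsilon_t(l,t)$; this is handled by observing that traces on $x=l$ of Hölder-continuous functions converge uniformly under time mollification whenever $[a',b']$ stays inside the mollification window.
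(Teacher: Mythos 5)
Your proposal is correct and, at the structural level, matches the paper's proof: both first differentiate the boundary terms of $H$ in \eqref{H_def} and use \eqref{v2}--\eqref{v1} to obtain \eqref{H_dot}, then reduce everything to justifying
$\tfrac{1}{2}\tfrac{d}{dt}|u_x|_{L^2}^2=\big[u_x u_t\big]_0^1-\int_0^1u_{xx}u_t\,dx$
so that the boundary products cancel. The genuine difference lies in the regularization device used to legitimize the interchange of $\tfrac{d}{dt}$ with the spatial integral when $u_{xt}$ is not known to exist. The paper, following \cite{ladyzen}, works with the Steklov averages $H_h(t)=\tfrac12\big|\tfrac1h\int_t^{t+h}u_x(\cdot,\tau)\,d\tau\big|_{L^2}^2$: it differentiates $H_h$ exactly, integrates the resulting identity over $[0,t]$, and only then passes to the limit in $h$ using l'H\^opital's rule and dominated convergence, recovering differentiability of $t\mapsto|u_x(\cdot,t)|_{L^2}^2$ from the integrated form. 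You instead smooth $u$ in time with a mollifier, prove the identity for $u^\varepsilon$ (where mixed partials commute since $\partial_t\partial_x u^\varepsilon=\partial_x\partial_t u^\varepsilon=\rho_\varepsilon'\ast_t u_x$), and conclude by uniform convergence of $u_x^\varepsilon,u_{xx}^\varepsilon,u_t^\varepsilon$ on compact sets together with the classical theorem on uniform convergence of derivatives; mere continuity of $u_{xx}$ and $u_t$ suffices here, the H\"older exponent is not needed. Both routes are sound. The one point worth flagging in yours is that a symmetric time mollifier only delivers the identity on the open interval $(0,T)$, whereas the paper's one-sided average applies from $t=0$; this is harmless for the way Claim~\ref{clm1} is used (the subsequent estimates are integrated in time and $H$ is continuous at $t=0$), but deserves a sentence. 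Your handling of the boundary traces is fine: uniform convergence on $[0,1]\times[a',b']$ immediately gives convergence of the evaluations at $x\in\{0,1\}$.
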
 
Using Claim \ref{clm1}, we obtain 
\begin{align}
\dot{H} =&~ - \int_{0}^{1}u_{xx}\bigg(\varepsilon (u)u_{xx}+\sum_{i=1}^{n}\gamma_i u^iu_x+u^p\bigg)dx.\label{final_lem2}
\end{align}
Using Young's inequality, we verify
\begin{align*}
-\gamma_i \int_{0}^{1}u_{xx}u^iu_xdx \leq \frac{\underline{\varepsilon}}{2n}|u_{xx}|_{L^2}^2+ \frac{\gamma_i^2 n}{2\underline{\varepsilon}}|u|_{\infty}^{2i}|u_x|_{L^2}^2
\end{align*}
for all $i\in \{1,2,...,n\}$. Similarly, we have
\begin{align*}
- \int_{0}^{1}u_{xx}u^pdx \leq \frac{\underline{\varepsilon}}{2}|u_{xx}|_{L^2}^2+ \frac{|u|_{\infty}^{2p-2}}{\underline{\varepsilon}}V.
\end{align*}
Combining the latter inequalities with \eqref{final_lem2}, we obtain the inequality \eqref{ineq_H_to_use}.

\subsubsection{Proof of Claim \ref{clm1}}
In the particular case where $u_{xt}$ is continuous, we have 
$$\frac{1}{2}\frac{d}{dt}|u_x|_{L^2}^2 = \int_{0}^{1}u_x(x)u_{xt}(x)dx.$$
Hence, using integration by parts, we obtain 
\begin{align}
\frac{1}{2}\frac{d}{dt}|u_x|_{L^2}^2
&= v_1(u(1))u_t(1)-v_o(u(0))u_t(0) \nonumber \\
&~- \int_{0}^{1}u_{xx}(x)u_t(x)dx. \label{H_dot_1}
\end{align}
Combining \eqref{H_dot} and \eqref{H_dot_1}, we conclude on the identity in Claim \ref{clm1}.

Now, according to Definition \ref{def1}, $u_{xt}$ may not necessarily be continuous. Claim \ref{clm1} still holds in this case. One way to prove it is to consider, as suggested in \cite{ladyzen}, the map $(t,h)\mapsto H_h(t)$ defined for $(t,h)\in [0,+\infty)\times (0,+\infty)$ by
$$ H_h(t) := \frac{1}{2}\left|\frac{1}{h}\int_{t}^{t+h}u_x(\cdot,\tau)d\tau \right|_{L^2}^2. $$ 
Using l'Hôpital rule and the Lebesgue dominated convergence theorem, we first show that 
\begin{align}
\lim_{h\to 0^-} H_h(t) = \frac{1}{2}|u_x(\cdot,t)|_{L^2}^2 \quad \forall t\geq 0. \label{37}
\end{align}
Next, we evaluate $\dot{H}_h(t) := \frac{\partial }{\partial t}H_h(t)$ for all $t\geq 0$, where $\partial /\partial t$ denotes the partial derivative operator with respect to $t$. In particular, we have
\begin{align}
&\dot{H}_h(t) = \bigg[\bigg(\frac{\int_{t}^{t+h}u_x(x,\tau)d\tau}{h}\bigg) \bigg(\frac{u(x,t+h)-u(x,t)}{h}\bigg)\bigg]_{0}^{1} \nonumber \\
&~- \int_{0}^{1}\bigg(\frac{\int_{t}^{t+h}u_{xx}(x,\tau)d\tau}{h}\bigg) \bigg(\frac{u(x,t+h)-u(x,t)}{h}\bigg)dx. \nonumber
\end{align}
Integrating the latter identity with respect to $t$, from $0$ to any $t\geq 0$, and taking the limit as $h\to 0^-$ while using \eqref{37} and the Lebesgue dominated convergence theorem, we can show that
\begin{align}
\frac{1}{2}|u_x(\cdot,t)|_{L^2}^2 & = \int_{0}^{t}\big[u_x(x,s) u_t(x,s)\big]_{0}^{1}ds +\frac{1}{2}|u'|_{L^2}^2 \nonumber \\
&~- \int_{0}^{t}\int_{0}^{1}u_{xx}(x,s)u_t(x,s)dxds \quad \forall t\geq 0. \nonumber 
\end{align}
As a result, the map 
$t\mapsto |u_x(\cdot,t)|_{L^2}^2$ is differentiable on $[0,+\infty)$ and \eqref{H_dot_1} holds, which proves Claim \ref{clm1}.

\subsection{Proof of Lemma \ref{bar}}\label{proof_bar}
To prove the existence of a unique solution to $\bar{\Sigma}$, it is sufficient to verify the three conditions of Lemma \ref{ladyz}. 

We first consider the condition 1 from Lemma \ref{ladyz}. We need to show that there exist two constants $\underline{c},\bar{c}>0$ such that \eqref{to_show_1} holds. To prove the latter, we start by assuming that $u\in [-\mathcal{M},\mathcal{M}]$. Consequently, $\tilde{\varepsilon}(u)=\varepsilon(u)$, and we thus have, according to Assumption \ref{ass1}, $\underline{\varepsilon} \leq \tilde{\varepsilon}(u)$. Moreover, since $\varepsilon$ is continuous, then 
$$\tilde{\varepsilon}(u) \leq \max_{|s|\leq \mathcal{M}}\{\varepsilon(s) \}< +\infty.$$
Now, we suppose that $u\in (-\infty,-\mathcal{M}-1]\cup [\mathcal{M}+1,+\infty)$. In this case, we have $\tilde{\varepsilon}(u)=1$.

Finally, we suppose that $u\in (-\mathcal{M}-1,-\mathcal{M})\cup (\mathcal{M},\mathcal{M}+1)$. If $\Phi(u)\leq 1/2$, then 
\begin{align*}
\frac{1}{2}\leq \tilde{\varepsilon}(u) \leq \max_{\mathcal{M}\leq |s|\leq \mathcal{M}+1}\{\varepsilon(s)\}+1 < +\infty.
\end{align*}
Otherwise, if $\Phi(u)>1/2$, then 
\begin{align*}
\frac{1}{2}\underline{\varepsilon} \leq \tilde{\varepsilon}(u) \leq \max_{\mathcal{M}\leq |s|\leq \mathcal{M}+1}\{\varepsilon(s)\}+1. 
\end{align*}
We conclude that \eqref{to_show_1} holds with 
\begin{align*}
\underline{c} &:= \min\left\{\frac{1}{2}\underline{\varepsilon},1\right\}, \\
\bar{c} &:= \max\left\{\max_{\mathcal{M}\leq |s|\leq \mathcal{M}+1}\{\varepsilon(s)\}+1,\max_{|s|\leq \mathcal{M}}\{\varepsilon(s)\}\right\}.
\end{align*}

Next, we prove \eqref{to_show_2}. To do so, we first apply Young's inequality to obtain, for all $u_x,u\in \mathbb{R}$, $u\left( \bar{\gamma}(u)u_x+\bar{f}(u)\right) \leq \frac{1}{2}u^2\left(\bar{\gamma}(u)^2+\bar{f}(u)^2\right)+\frac{1}{2}u_x^2$. Moreover, note that we have $\bar{f}(u)^2 = \Phi(u)^2 u^{2p} \leq (\mathcal{M}+1)^{2p}$. Similarly, note that
\begin{align*}
\bar{\gamma}(u)^2 = \Phi(u)^2\left(\sum_{i=1}^{n}\gamma_i u^i\right)^2 \leq \left(\sum_{i=1}^{n}|\gamma_i|\right)^2(\mathcal{M}+1)^2,
\end{align*}
which allows us to conclude that \eqref{to_show_2} holds with 
\begin{align*}
C\geq \frac{1}{2}\max\left\{1,(\mathcal{M}+1)^{2p},\left(\sum_{i=1}^{n}|\gamma_i|\right)^2(\mathcal{M}+1)^2\right\}.
\end{align*}

Now, to establish \eqref{to_show_3}, we apply Young's inequality to obtain 
\begin{align*}
u\bar{v}_1(u) &\leq \frac{1}{2}u^2+\frac{1}{2}\Phi(u)^2\left(\lambda_1u+\mu u^3\right)^2 \\
&\leq \frac{1}{2}u^2+\frac{1}{2}\left(\lambda_1(\mathcal{M}+1)+\mu (\mathcal{M}+1)^3\right)^2.
\end{align*}
As a consequence, \eqref{to_show_3} is verified with 
\begin{align*}
C \geq \frac{1}{2}\max\left\{1,\left(\lambda_1(\mathcal{M}+1)+\mu (\mathcal{M}+1)^3\right)^2\right\}.
\end{align*}
We prove \eqref{to_show_4} in a similar way.

The condition 2 from Lemma \ref{ladyz} follows from the fact that $u\mapsto v_1(u),v_o(u),\varepsilon(u),u^i$ for $i\in \{1,2,...,n,p\}$, and $\Phi : \mathbb{R}\to [0,1]$, are of class $\mathcal{C}^2$.

Finally, to establish the condition 3 from Lemma \ref{ladyz}, it is enough to note that, according to Agmon's inequality, we have 
$|u_o(x)|< \mathcal{M} \quad  \forall x\in [0,1]$.

Consequently, $\bar{v}_1(u_o(1))=v_1(u_o(1))=u_o'(1)$ and $\bar{v}_o(u_o(0))=v_o(u_o(0))=u_o'(0)$, which concludes the proof.

\par\noindent 
\parbox[t]{\linewidth}{
\noindent\parpic{\includegraphics[height=1.5in,width=1in,clip,keepaspectratio]{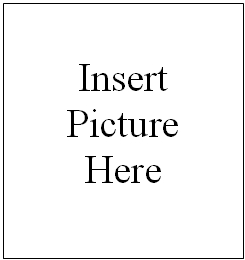}}
\noindent {\bf Mohamed Camil Belhadjoudja}\
received a Control-Engineer and MS degrees from the National Polytechnic School, Algeria, in 2021, and a MS degree in Control from the Université Grenoble Alpes, France, in 2022. He is a Ph.D. student at GIPSA-lab, France. His research is focused on control and state estimation for nonlinear PDEs.}


\par\noindent 
\parbox[t]{\linewidth}{
\noindent\parpic{\includegraphics[height=2in,width=1in,clip,keepaspectratio]{insert-picture-here.jpg}}
\noindent {\bf Mohamed Maghenem}\
 received his Control-Engineer degree from the Polytechnical School of Algiers, Algeria, in 2013, his  PhD degree on Automatic Control from the University of Paris-Saclay, France, in 2017.  He was a Postdoctoral Fellow at the Electrical and Computer Engineering Department at the University of California at Santa Cruz from 2018 through 2021. M. Maghenem holds a research position at the French National Centre of Scientific Research (CNRS) since January 2021.  His research interests include control systems theory (linear,  non-linear,  and hybrid) to ensure (stability, safety, reachability, synchronisation, and robustness); with applications to mechanical systems,  power systems,  cyber-physical systems,  and some partial differential equations.}
 

\par\noindent 
\parbox[t]{\linewidth}{
\noindent\parpic{\includegraphics[height=1.5in,width=1in,clip,keepaspectratio]{insert-picture-here.jpg}}
\noindent {\bf Emmanuel Witrant}\
obtained a B.Sc. in Aerospace Eng. from Georgia Tech in 2001 and a Ph.D. in Automatic Control from Grenoble University in 2005. He joined the Physics department of Univ. Grenoble Alpes and GIPSA-lab as an Associate Professor in 2007 and became Professor in 2020. His research interest is focused on finding new methods for modeling and control of inhomogeneous transport phenomena (of information, energy, gases), with real-time and/or optimization constraints. Such methods provide new results for automatic control (time-delay and distributed systems), controlled thermonuclear fusion (temperature/magnetic flux profiles in tokamak plasma), environmental sciences (atmospheric history of trace gas from ice cores measurements) and Poiseuille’s flows (ventilation control in mines, car engines and intelligent buildings).}

\par\noindent 
\parbox[t]{\linewidth}{
\noindent\parpic{\includegraphics[height=1.5in,width=1in,clip,keepaspectratio]{insert-picture-here.jpg}}
\noindent {\bf Miroslav Krstic}\
is Distinguished Professor at UC San
Diego. He is Fellow of IEEE, IFAC, ASME, SIAM, AAAS,
IET, AIAA (Assoc. Fellow), and Serbian Academy of
Sciences and Arts. He has received the Bellman Award,
Bode Lecture Prize, Reid Prize, Oldenburger Medal,
Nyquist Lecture Prize, Paynter Award, Ragazzini Award,
IFAC Nonlinear Control Systems Award, Distributed
Parameter Systems Award, Adaptive and Learning Systems Award, Chestnut Award, AV Balakrishnan Award,
CSS Distinguished Member Award, the PECASE, NSF
Career, and ONR YIP, and the Schuck and Axelby paper
prizes. He serves as Editor-in-Chief of Systems \& Control Letters and Senior Editor in Automatica.}
\end{document}